\newcommand{\Var}{\mathop \mathrm{Var}}
\newcommand{\supp}{\mathop \mathrm{supp}}
\newtheorem{Proposition}{Proposition}
\theoremstyle{definition}
\newtheorem{Lemma}{Lemma}
\newtheorem{Theorem}{Theorem}
\theoremstyle{Condition}
\theoremstyle{remark}
\newtheorem{Remark}{Remark}
\newcommand{\veps}{\varepsilon}
\newcommand{\vp}{\varphi}
\newcommand{\aph}{\alpha}
\begin{document}

\title[On properties of a flow with discontinuous drift]{On properties of a flow generated by an SDE with discontinuous drift}

\author{Olga V. Aryasova$^1$}
\address{Institute of Geophysics, National Academy of Sciences of Ukraine,
Palladin pr. 32, 03680, Kiev-142, Ukraine}
\email{oaryasova@mail.ru}
\author{Andrey Yu. Pilipenko$^{1,2}$}
\address{Institute of Mathematics,  National Academy of Sciences of
Ukraine, Tereshchenkivska str. 3, 01601, Kiev, Ukraine}
\email{apilip@imath.kiev.ua}

\footnotetext[1]{Research is partially supported by the Grant of the President of Ukraine ¹ F47/457-2012.}

\footnotetext[2]{Research is partially supported by State fund for fundamental researches of Ukraine and the Russian foundation for basic researches, Grant F40.1/023.}

\subjclass[2000]{60J65, 60H10}
 \dedicatory{}

\keywords{stochastic flow, local times, differentiability with respect to initial data}

\begin{abstract}
We consider a stochastic flow on $\mathds{R}$ generated by an SDE with its drift being a function of bounded variation. We show that the flow is differentiable with respect to the initial conditions. Asymptotic  properties of the flow are studied.
\end{abstract}
\maketitle \thispagestyle{empty}
\section*{Introduction}
Consider an SDE of the form
\begin{equation}\label{eq}
\left\{
\begin{aligned}
d\vp_t(x)&=\aph(\vp_t(x))dt+\sigma(\vp_t(x))dw(t),\\
\vp_0(x)&=x,\\
\end{aligned}\right.
\end{equation}
where $x\in\mathbb{R}, \ (w(t))_{t\geq0}$ is a one-dimensional Wiener process.

It is well known (cf. \cite{Kunita90}) that if the coefficients of
(\ref{eq}) are continuously differentiable and the derivatives are bounded and H{\"{o}}lder
continuous then there exists a flow of diffeomorphisms for equation
(\ref{eq}). Under the condition of  Lipschitz continuity of the
coefficients it was shown the existence of a flow of
homeomorphysms (ibid.). Moreover, in the latter situation Bouleau and
Hirsch \cite{Bouleau+91} established  the differentiability of the
flow in generalized sense. Recently, the essential improvement of the results was obtained by Flandoli et al. \cite{Flandoli+10}. They
proved the existence of a flow of diffeomorphysms in the case of a
smooth non-degenerate noise and a possibly unbounded H\"{o}lder
continuous drift term.

An SDE with bounded variation drift and $\sigma\equiv 1$ was
treated by Attanasio \cite{Attanasio10}, who  stated the existence
of stochastic flow of class $C^{1,\varepsilon}, \
\varepsilon<1/2,$ under the assumption about boundedness of the
positive or the negative part of the distributional derivative of
$\aph.$ We consider equation (\ref{eq}) with $\sigma\equiv 1$ and
$\aph$ being a function of bounded variation. We have not
additional assumptions about boundedness of the derivative.
Besides our method is different from their one.

Note that sometimes the strong solution may exist even if $\alpha$
is a measure. However, in this case the flow may be
discontinuous in $x$. For example, if $\aph(x)=\beta\delta_0(x), \
\sigma\equiv1,$ where $\beta\in[-1,1], \ \delta_0$ is a  Dirac
delta function at zero, then the corresponding strong solution of
(\ref{eq}) exists and it is a skew Brownion motion
\cite{Harrison+81} but the flow is discontinuous and coalescent (see Barlow et al.
\cite{Barlow+01} and Burdzy and Kaspi \cite{Burdzy+04}).

\section{The main results}
Consider an SDE
\begin{equation}\label{main_eq}
\left\{
\begin{aligned}
d\vp_t(x)&=\aph(\vp_t(x))dt+dw(t),\\
\vp_0(x)&=x,\\
\end{aligned}\right.
\end{equation}
where $x\in\mathbb{R}, \ \alpha$ is a function on $\mathds{R},$
$(w(t))_{t\geq0}$ is a one-dimensional Wiener process.

Later on the function $\aph$ will be assumed to satisfy some of
the following conditions.
\begin{enumerate}[(A)]
\item $\aph$ has bounded variation on each compact subset of $\mathds{R}$;\label{Condition_A}
\vskip 5 pt
\item for all $x\in\mathds{R}$\label{Condition_B}
$$
|\aph(x)|^2\leq C(1+|x|^2);
$$
\vskip 5 pt
\item $\aph$ is a function of bounded variation on
$\mathds{R}$;\label{BVonR}
\vskip 5 pt
\item there exist $a<0, b>0$
such that \label{limONinfty}
$$
\begin{aligned}
\aph(x)\to a,& \ \ x\to+\infty,\\
\aph(x)\to b,& \ \ x\to-\infty.
\end{aligned}
$$
\vskip 5 pt
\end{enumerate}

Given $p\geq 1$, denote by $W_{p,loc}^1(\mathds{R})$ the set of
functions defined on $\mathds{R}$ that belong to the Sobolev space
$W_{p}^1([c,d])$ for all $\{c,d\}\subset\mathds{R}$,  $c<d$. The
results about differentiability and non-coalescence of the flow
generated by equation (\ref{main_eq}) is represented as the
following statement.
\begin{Theorem}\label{meeting} {\it  Let $\aph$ satisfy conditions (\ref{Condition_A}), (\ref{Condition_B}). Then
\begin{enumerate}[1)]
\item For each $x\in\mathds{R}$ there exists a unique strong solution to equation (\ref{main_eq}).
\item For all $t\geq0,$
$$
P\{\forall p\geq1\ : \ \varphi_t(\cdot)\in W_{p,loc}^1(\mathds{R}) \}=1.
$$
\item For $t\geq 0$ the Sobolev  derivative $\nabla\varphi_t(x)$ is of the form
\begin{equation}\label{Sobolev_der_expres}
{P}\left\{\nabla\varphi_t(x)=\exp\left\{\int_{-\infty}^{+\infty}L_z^{\varphi(x)}(t)d\aph(z)\right\}, \ x\in\mathds{R}\right\}=1.
\end{equation}
where $L_z^{\varphi(x)}(t)$ is a local time of the process
$(\varphi_s(x))_{s\in[0,t]}$ at the point $z$. \item For all
$\left\{x_1,x_2\right\}\subset\mathds{R}, \ x_1\neq x_2,$
$$
P\left\{\varphi_t(x_1)\neq\varphi_t(x_2), \ t\geq0\right\}=1.
$$
\end{enumerate}
}
\end{Theorem}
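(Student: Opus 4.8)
The plan is to establish the four assertions in sequence, using a smooth approximation of the drift as the common device. First I would record that Condition~(\ref{Condition_A}) forces $\aph$ to be locally bounded, since a function of locally bounded variation is bounded on every compact; thus the drift is a bounded measurable function on each compact set. For assertion~1), since the diffusion coefficient is the nondegenerate constant $1$, strong existence and pathwise uniqueness follow from the classical results of Zvonkin and Veretennikov for one-dimensional equations with additive noise and locally bounded measurable drift, while the linear growth Condition~(\ref{Condition_B}) rules out explosion, so the solution is defined for all $t\geq0$. I would then fix a jointly continuous version of the flow $(t,x)\mapsto\vp_t(x)$.

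For assertions~2) and 3) the idea is to differentiate a smooth approximation and pass to the limit. I would choose smooth $\aph_n\to\aph$ with uniformly bounded variation on each compact and $d\aph_n\to d\aph$ weakly as measures, and let $\vp^n$ be the $C^1$ flow of the approximating equation. Differentiation in $x$ yields the linear variational equation with solution $\partial_x\vp^n_t(x)=\exp\{\int_0^t\aph_n'(\vp^n_s(x))\,ds\}$, and, since the quadratic variation of $\vp^n(x)$ equals $t$, the occupation time formula rewrites the exponent as $\int_{\mathbb{R}}L_z^{\vp^n(x)}(t)\,d\aph_n(z)$. It then remains to pass to the limit: I would show $\vp^n_t(x)\to\vp_t(x)$ and, crucially, that the local times satisfy $L_z^{\vp^n(x)}(t)\to L_z^{\vp(x)}(t)$ uniformly in $z$ (in probability). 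Combining this uniform convergence of compactly supported continuous functions with the weak convergence $d\aph_n\to d\aph$ and the uniform total variation bound gives convergence of the exponents, hence $\partial_x\vp^n_t(x)\to\exp\{\int_{\mathbb{R}}L_z^{\vp(x)}(t)\,d\aph(z)\}$. Because the exponent is bounded (local time is bounded with compact support and $|d\aph|$ is finite on the relevant compact), the derivatives are bounded in every $L^p_{loc}$, so the limit is the Sobolev derivative of $\vp_t$; this yields $\vp_t(\cdot)\in W_{p,loc}^1(\mathbb{R})$ for every $p$ together with formula~(\ref{Sobolev_der_expres}).

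For assertion~4) I would exploit that representation~(\ref{Sobolev_der_expres}) makes $\nabla\vp_t$ strictly positive: the exponent satisfies $\int_{\mathbb{R}}L_z^{\vp(x)}(t)\,d\aph(z)\geq-\mathrm{Var}(\aph)\cdot\sup_z L_z^{\vp(x)}(t)>-\infty$, where the variation is taken over the compact range of the trajectory, so $x\mapsto\vp_t(x)$ is strictly increasing and hence injective for each fixed $t$. To upgrade this to the simultaneous statement over all $t\geq0$, assume without loss of generality $x_1<x_2$ and set $D_t:=\vp_t(x_2)-\vp_t(x_1)>0$. The additive noise cancels, so $D_t$ is absolutely continuous with $\dot D_t=\aph(\vp_t(x_2))-\aph(\vp_t(x_1))$; by continuity the meeting event is $\bigcup_{T}\{\inf_{t\in[0,T]}D_t=0\}$, so it suffices to show $\inf_{t\in[0,T]}D_t>0$ a.s.\ for each $T$. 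Writing $D_t=\int_{x_1}^{x_2}\nabla\vp_t(y)\,dy\geq(x_2-x_1)\exp\{-\mathrm{Var}(\aph)\,L^\ast_T\}$, where $L^\ast_T=\sup\{L_z^{\vp(y)}(t):z\in\mathbb{R},\,y\in[x_1,x_2],\,t\in[0,T]\}$ is a.s.\ finite by the joint continuity of the flow and compactness of its range, gives the desired positive lower bound; letting $T\to\infty$ yields the claim. (Pathwise uniqueness, which makes coalescence absorbing in time, is consistent with this but not needed.)

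The main obstacle is the limiting argument in assertions~2)--3): one must prove that the local times of the approximating flows converge uniformly in the space variable and then combine this with only weak convergence of the measures $d\aph_n$. Controlling $\sup_z|L_z^{\vp^n(x)}(t)-L_z^{\vp(x)}(t)|$ requires quantitative estimates via the Tanaka/occupation formula together with moment bounds for the flow supplied by Condition~(\ref{Condition_B}), and the interchange of a weakly convergent sequence of measures with a merely uniformly convergent integrand must be justified using the uniform total variation bound and the common compact support of the local times. Showing that a single null set serves for all $t$ and all $y$ in the final step likewise relies on the joint continuity and boundedness of the local time field.
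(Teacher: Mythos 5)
Your overall strategy coincides with the paper's (mollify the drift, differentiate the smooth flow, rewrite the exponent via the occupation-time formula, prove convergence of local times, pass to the limit, and get non-coalescence from the resulting lower bound on $\nabla\varphi_t$), but there is a genuine gap at the step where you pass from pointwise convergence of the derivatives to Sobolev differentiability. You assert that ``the exponent is bounded \dots\ so the derivatives are bounded in every $L^p_{loc}$.'' This is false as a deterministic statement: the exponent $\int_{\mathbb{R}}L^{\varphi^n(x)}_z(t)\,d\alpha_n(z)$ is only controlled by $\mathrm{Var}(\alpha)\cdot\sup_z L^{\varphi^n(x)}_z(t)$, and $\sup_z L_z(t)$ is an \emph{unbounded} random variable (via Tanaka's formula it dominates the running supremum of a stochastic integral, and has exponential moments but no deterministic bound). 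A random, $n$- and $x$-dependent a.s.\ finite bound is not enough: to identify $\psi_t=\lim_n\partial_x\varphi^n_t$ as the weak derivative of $\varphi_t$, you must pass to the limit in $\varphi^n_t(x_2)-\varphi^n_t(x_1)=\int_{x_1}^{x_2}\partial_x\varphi^n_t(y)\,dy$, and that interchange requires uniform (in $n$ and $y$, jointly with $\omega$) integrability of the derivatives. This is exactly where the paper spends its main technical effort: it proves $\sup_{n,x}\mathbb{E}\,|\partial_x\varphi^n_t(x)|^p<\infty$ for every $p$, by writing the local time in the exponent through Tanaka's formula, splitting into four factors with H\"older's inequality, and estimating the hardest factor --- the exponential of the stochastic-integral term --- by representing that martingale as a time-changed Wiener process $W(\tau_t)$ with $\tau_t\le t$ and using $\mathbb{E}\exp\{c\sup_{s\le t}|W(s)|\}<\infty$. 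With these uniform moments, convergence in probability upgrades to $L^p$ convergence, then (along a subsequence) to a.s.\ convergence of $\int_c^d|\partial_x\varphi^n_t-\psi_t|^p\,dx$, which is what legitimately yields assertions 2) and 3). Your sketch contains no substitute for this estimate.

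A secondary soft spot of the same nature appears in your proof of assertion 4): you invoke a.s.\ finiteness of $L^\ast_T=\sup\{L^{\varphi(y)}_z(t):\ z\in\mathbb{R},\ y\in[x_1,x_2],\ t\in[0,T]\}$, citing joint continuity of the flow. But continuity (or boundedness) of the local-time field in the initial condition $y$ is not established and does not follow from continuity of the flow; the obstruction is again the stochastic-integral term in Tanaka's formula, whose supremum over the uncountable family $(y,z)$ needs a separate argument. The paper avoids this entirely: for each fixed $y$ it has the a.s.\ bound $\inf_{t\le T}\int_{\mathbb{R}} L^{\varphi(y)}_z(t)\,d\alpha(z)\ge -M_T(y)>-\infty$, and then applies Fubini's theorem to obtain this bound simultaneously for almost every $y$, which suffices because $\varphi_t(x_2)-\varphi_t(x_1)$ is an integral in $y$ of the (nonnegative) derivative. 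You should replace your supremum over $y$ by this Fubini argument.
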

\vskip 10 pt
\begin{Remark}\label{remark_local_time}
We define the local time of the process $(\vp_t(x))_{t\geq0}$ at the point $y\in\mathds{R}$ by the formula
$$
L_y^{\varphi(x)}(t)=\lim_{\veps\downarrow0}\frac{1}{\veps}\int_0^t\mathds{1}_{[y,y+\veps)}(\varphi_s(x))ds, \ t\geq0.
$$
\end{Remark}
\vskip 10 pt

We prove the Theorem \ref{meeting} in two stage. At the first one
we consider $\aph$ having a compact support on $\mathds{R}$. In
Sections \ref{Approximation}-\ref{differential_properties} we
obtain auxiliary results for this stage of proof. The Theorem is
proved in Section 5.

In the next sections we analyze the asymptotic behavior of the flow as
$t\to \infty.$ To do this in Section 6 we find the stationary
distribution of the process solving (\ref{main_eq}) under
conditions (\ref{BVonR}), (\ref{limONinfty}). The main result
about asymptotic properties of the flow is represented in the
following Theorem, proof of which can be found in Section 7. In
Section 8 the example is represented.

\vskip 5 pt

\begin{Theorem} \label{asymptot} {\it Let $\aph$ satisfy conditions (\ref{BVonR}), (\ref{limONinfty}). Then
for all $\{x_1,x_2\}\subset\mathds{R}, \ x_1<x_2,$
\begin{equation*}
\frac{\ln(\varphi_t(x_2)-\varphi_t(x_1))}{t}\to
\int_{-\infty}^{+\infty}\left(-\int_z^{+\infty}\aph(y)dP_{stat}(y)\right)d\aph(z),
\ t\to\infty, \ \mbox{almost surely,}
\end{equation*}
where $P_{stat}$ is a stationary distribution of the process $(\vp_t(x))_{t\geq0}.$}
\end{Theorem}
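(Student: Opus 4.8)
The plan is to analyze the asymptotic separation rate $\frac{1}{t}\ln(\varphi_t(x_2)-\varphi_t(x_1))$ by reducing it to the Lyapunov exponent of the derivative flow, and then to identify that exponent as a time average that converges, via ergodicity of the stationary process, to the stated spatial integral against $d\aph$.

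First I would invoke Theorem~\ref{meeting} under the stronger hypotheses here: conditions (\ref{BVonR}) and (\ref{limONinfty}) imply (\ref{Condition_A}) and (\ref{Condition_B}), so the flow is differentiable and the Sobolev derivative admits the representation (\ref{Sobolev_der_expres}), namely $\nabla\varphi_t(x)=\exp\left\{\int_{-\infty}^{+\infty}L_z^{\varphi(x)}(t)\,d\aph(z)\right\}$. By the mean value theorem applied to the increasing map $\varphi_t(\cdot)$ together with monotonicity of $x\mapsto\varphi_t(x)$, one has $\varphi_t(x_2)-\varphi_t(x_1)=\int_{x_1}^{x_2}\nabla\varphi_t(u)\,du$, and the logarithm of this increment should be asymptotically governed by $\frac{1}{t}\int_{-\infty}^{+\infty}L_z^{\varphi(x)}(t)\,d\aph(z)$ for a representative starting point $x\in[x_1,x_2]$. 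The key reduction is therefore to show
\begin{equation*}
\frac{1}{t}\int_{-\infty}^{+\infty}L_z^{\varphi(x)}(t)\,d\aph(z)\to\int_{-\infty}^{+\infty}\left(-\int_z^{+\infty}\aph(y)\,dP_{stat}(y)\right)d\aph(z),\ t\to\infty,\ \text{a.s.}
\end{equation*}

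The central mechanism is the occupation-time ergodic theorem. Under conditions (\ref{BVonR}) and (\ref{limONinfty}) the process $(\varphi_t(x))_{t\ge0}$ is positively recurrent with stationary distribution $P_{stat}$ (constructed in Section~6), so by the ratio ergodic theorem $\frac{1}{t}L_z^{\varphi(x)}(t)$ converges almost surely to the value of the stationary occupation density at $z$. I would then integrate this convergence against $d\aph$, which requires a uniform-in-$z$ control to interchange the limit with the integral; the natural route is to write $\frac{1}{t}\int_{-\infty}^{+\infty}L_z^{\varphi(x)}(t)\,d\aph(z)$ via an integration-by-parts / Fubini argument as a time average of a function of the current position, $\frac{1}{t}\int_0^t g(\varphi_s(x))\,ds$, where $g$ is built from $\aph$ and its variation, and then apply Birkhoff's ergodic theorem directly. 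Matching the limit of this time average, $\int g\,dP_{stat}$, to the right-hand side $\int_{-\infty}^{+\infty}\left(-\int_z^{+\infty}\aph(y)\,dP_{stat}(y)\right)d\aph(z)$ is an exercise in Fubini and the explicit form of the stationary density (which satisfies an ODE relating it to $\aph$), where the factor $-\int_z^{+\infty}\aph\,dP_{stat}$ emerges as the stationary expectation of the local-time density at $z$.

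The main obstacle I anticipate is twofold. First, the local time $L_z^{\varphi(x)}(t)$ must be controlled uniformly over the whole line $z\in\mathds{R}$ to justify integrating the ratio-ergodic limit against the signed measure $d\aph$; since $\aph$ has bounded variation on $\mathds{R}$ this measure is finite, but the tails of $L_z$ and the non-uniformity of the ergodic convergence near the recurrence boundaries require care, likely via moment bounds on the local time and a tightness/domination argument. Second, passing from $\frac{1}{t}\ln\nabla\varphi_t(x)$ to $\frac{1}{t}\ln(\varphi_t(x_2)-\varphi_t(x_1))$ demands showing that the logarithmic contribution of the spatial integration over $[x_1,x_2]$ is asymptotically negligible and, more delicately, that the limit of $\frac{1}{t}\int_{-\infty}^{+\infty}L_z^{\varphi(x)}(t)\,d\aph(z)$ does not depend on the starting point $x$ — this independence follows from recurrence (all trajectories eventually couple to the same stationary regime), but making it rigorous uniformly on $[x_1,x_2]$ is the technical heart of the argument.
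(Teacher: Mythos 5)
Your overall frame coincides with the paper's: both reduce the theorem, via the representation (\ref{Newton_Leib}), to the almost sure behaviour of $\frac{1}{t}\int_{-\infty}^{+\infty}L_z^{\varphi(x)}(t)\,d\aph(z)$, and both let ergodicity of the stationary regime produce the constant. But your central mechanism has a genuine gap. You propose to rewrite this quantity ``via integration by parts / Fubini'' as a Birkhoff average $\frac{1}{t}\int_0^t g(\varphi_s(x))\,ds$ for a function $g$ built from $\aph$, and then apply the ergodic theorem. This is impossible in general: $d\aph$ is only a finite signed measure and may have atoms --- in the paper's own example (Section 8) one has $d\aph=(a-b)\delta_0$, so the functional in question is a multiple of $L_0^{\varphi(x)}(t)$, a singular additive functional that increases only on the Lebesgue-null time set $\{s:\varphi_s(x)=0\}$ and is not the time integral of any function of the current position. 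The device that actually performs the conversion is Tanaka's formula (\ref{local_phi}): it splits $L_z^{\varphi(x)}(t)$ into a boundary term, a stochastic integral, and the additive functional $-\int_0^t\mathds{1}_{(z,\infty)}(\varphi_s(x))\aph(\varphi_s(x))\,ds$; only the last one is a Birkhoff average (the paper's $f_3$ in (\ref{sum_of_f})), handled by the ergodic theorem of \cite{Gikhman+68engl} after splitting $\aph=\aph_1-\aph_2$ into monotone parts. The two remainder terms are not negligible for free: the boundary term requires proving $\varphi_t(x)/t\to0$ a.s., which the paper gets by sandwiching $\varphi$ between reflected Brownian motions via the comparison theorem, and the martingale term requires an almost-sure law of large numbers (Doob's inequality over dyadic blocks) together with a Garsia--Rodemich--Rumsey estimate to make it uniform in $x\in[x_1,x_2]$. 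This constitutes the bulk of the paper's proof and is entirely absent from your plan. Moreover, if you insist instead on the ratio ergodic theorem for local times, it gives $L_z^{\varphi(x)}(t)/t\to p_{stat}(z)$, whereas the stationary Fokker--Planck relation yields $-\int_z^{+\infty}\aph\,dP_{stat}=\frac12 p_{stat}(z)$; so your final ``exercise in Fubini'' is precisely where the local-time normalization must be reconciled with the conventions in (\ref{Sobolev_der_expres}) and (\ref{local_phi}), and it is not automatic.

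Second, your justification of the independence of the starting point --- ``all trajectories eventually couple to the same stationary regime'' --- is false as stated: assertion 4) of Theorem \ref{meeting} says the flow is non-coalescing, so trajectories started from distinct points never meet. The paper obtains uniformity over $x\in[x_1,x_2]$ without any coupling: monotonicity of the flow (comparison theorem) sandwiches the drift functionals between those of the endpoint processes $\varphi(x_1)$ and $\varphi(x_2)$ (inequality (\ref{I_11}) and its analogues), while the passage from $\ln\int_{x_1}^{x_2}\exp\{\cdots\}\,dx$ to one-point functionals is made rigorous by Jensen's inequality for the lower bound (\ref{lower bound}) and H\"older's inequality for the upper bound (\ref{sum_of_f}), not by choosing a ``representative'' starting point.
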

\vskip 5 pt
\begin{Remark}
Under the conditions of Theorem the stationary distribution of the process $(\vp_t(x))_{t\geq0}$ does not depend on the starting point $x$.
\end{Remark}

\section{Approximation of the SDE  by SDEs with smooth coefficients}\label{Approximation}
Let $\aph$ be a function of bounded variation on $\mathds{R}$ such that it has a compact support. Then
for each $x\in\mathds{R}$ there exists a unique strong solution to (\ref{main_eq}) (cf. \cite{Zvonkin74}).

For $n\geq1,$ let $g_n$ be a continuously differentiable function on $\mathds{R}$ equal to zero out of $\left(-\frac{1}{n},\frac{1}{n}\right)$ and such that $g_n(x)\geq0, x\in\mathds{R},$ $\int_{\mathds{R}}g_n(z)dz=1$. Put, for $x\in\mathds{R},$
$$
\aph_n(x)=\int_{\mathds{R}} g_n(x-y)\aph(y)dy.
$$
Then $\aph_n(x)\to\aph(x)$ as $n\to\infty$ at all points of continuity of $\aph$.

For $n\geq1$, consider an SDE
\begin{equation}\label{main_eq_n}
\left\{
\begin{aligned}
d\vp^n_t(x)&=\aph_n(\vp^n_t(x))dt+dw(t),\\
\vp^n_0(x)&=x.\\
\end{aligned}\right.
\end{equation}

\begin{Remark}\label{support_phi_n}
There exists $S>0$ such that for all $n\geq1, z\in\mathds{R},\ |z|\geq S$, $\aph_n(z)=0.$
Besides,
$$
\sup_{x\in\mathds{R}}|\aph_n(x)|\leq\sup_{x\in\mathds{R}}|\aph(x)|, \ n\geq1.
$$
\end{Remark}

\begin{Remark}\label{variation_phi_n}
For each $n\geq1$, $\aph_n$ is a function of bounded variation on $\mathds{R}$, and
$$
\Var_{\mathds{R}} \aph_n\leq\Var_{\mathds{R}}\aph.
$$
\end{Remark}

\begin{Lemma}\label{converg_Lp} {\it For each $p\geq1$,
\begin{enumerate}[1)]
\item for all $t\geq0,$
$$
\sup_{x\in\mathds{R}}\left(\mathds{E}(|\vp_t^n(x)|^p+|\vp_t(x)|^p)\right)<\infty;
$$
\item for all   $x\in\mathds{R}, \ t\geq0,$
$$
\mathds{E}|\vp^n_t(x)-\vp_t(x)|^p\to 0 \ \mbox{as} \ n\to \infty.
$$
\end{enumerate} }
\end{Lemma}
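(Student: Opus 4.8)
The plan is to prove the two assertions separately, the first being a routine moment estimate and the second the substantive part.

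For assertion 1) I would start from the integral form
$$\varphi_t^n(x)=x+\int_0^t\alpha_n(\varphi_s^n(x))\,ds+w(t).$$
Since $\alpha$ has compact support and bounded variation it is bounded, and by Remark \ref{support_phi_n} we have $\sup_{n}\sup_{z}|\alpha_n(z)|\le M:=\sup_z|\alpha(z)|<\infty$. Hence $|\varphi_t^n(x)|\le |x|+Mt+|w(t)|$, and raising to the power $p$ and taking expectations gives a bound of the form $C_p(|x|^p+(Mt)^p+\mathds{E}|w(t)|^p)$ that is finite and independent of $n$ (and the identical estimate applies to $\varphi_t(x)$). The only input is the uniform-in-$n$ boundedness of the drifts; no regularity is needed.

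For assertion 2) the key observation is that $\varphi^n$ and $\varphi$ are driven by the \emph{same} Wiener process, so in the difference the martingale parts cancel and $\Delta_t^n:=\varphi_t^n(x)-\varphi_t(x)$ has finite variation, $\Delta_t^n=\int_0^t(\alpha_n(\varphi_s^n(x))-\alpha(\varphi_s(x)))\,ds$. I would split the integrand as $[\alpha_n(\varphi_s^n)-\alpha_n(\varphi_s)]+[\alpha_n(\varphi_s)-\alpha(\varphi_s)]$. The second piece is handled by the occupation time formula for the continuous semimartingale $\varphi$ (for which $\langle\varphi\rangle_s=s$):
$$\int_0^t|\alpha_n(\varphi_s)-\alpha(\varphi_s)|\,ds=\int_{\mathds{R}}|\alpha_n(z)-\alpha(z)|\,L_z^{\varphi(x)}(t)\,dz.$$
Because $\alpha$ has bounded variation it is continuous off a countable set, so $\alpha_n(z)\to\alpha(z)$ for a.e. $z$; together with $|\alpha_n-\alpha|\le 2M$ and $\int_{\mathds{R}}L_z^{\varphi(x)}(t)\,dz=t<\infty$, dominated convergence shows this term tends to $0$ almost surely, and, being bounded by $2Mt$, in $L^p$ as well.

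The main obstacle is the first piece, $\int_0^t|\alpha_n(\varphi_s^n)-\alpha_n(\varphi_s)|\,ds$, since the $\alpha_n$ are not Lipschitz uniformly in $n$ (their derivatives blow up near the jumps of $\alpha$), so a naive Gronwall argument fails. Here I would use the bounded variation of $\alpha_n$: from $|\alpha_n(b)-\alpha_n(a)|\le\int_{a\wedge b}^{a\vee b}|\alpha_n'(u)|\,du$ and Fubini,
$$\int_0^t|\alpha_n(\varphi_s^n)-\alpha_n(\varphi_s)|\,ds\le\int_{\mathds{R}}|\alpha_n'(u)|\Big(\int_0^t\mathds{1}\{|\varphi_s(x)-u|\le\Delta_t^{n,*}\}\,ds\Big)du,\qquad \Delta_t^{n,*}:=\sup_{s\le t}|\Delta_s^n|.$$
Applying the occupation formula to the inner integral bounds it by $2\Delta_t^{n,*}\sup_zL_z^{\varphi(x)}(t)$, and since $\int_{\mathds{R}}|\alpha_n'|\le\Var_{\mathds{R}}\alpha$ by Remark \ref{variation_phi_n}, taking the supremum over $s\le t$ yields $\Delta_t^{n,*}\le 2\,\Var_{\mathds{R}}\alpha\cdot\sup_zL_z^{\varphi(x)}(t)\cdot\Delta_t^{n,*}+\varepsilon_n(t)$, where $\varepsilon_n(t)$ is the second piece already shown to vanish. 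This closes only when the local-time factor is $<1$, which fails for fixed $t$; the remedy is that $\sup_z L_z^{\varphi(x)}(h)\to0$ as $h\downarrow0$, so one runs the estimate on a finite (random) partition of $[0,t]$ chosen so that each accumulated local-time increment makes the contraction constant $<1$, and iterates across the finitely many subintervals. This gives $\Delta_t^{n,*}\to0$ almost surely, and the uniform moment bounds of assertion 1) furnish uniform integrability of $|\Delta_t^n|^p$, whence convergence in $L^p$. A cleaner alternative avoiding this local-time bookkeeping is to pass to the scale functions $s_n(x)=\int_0^x e^{-2\int_0^y\alpha_n(z)\,dz}\,dy$, under which $s_n(\varphi_t^n)$ is driftless with diffusion coefficient Lipschitz uniformly in $n$ (its $y$-derivative is $-2\alpha_n(s_n^{-1}(y))$, bounded by $2M$); standard $L^p$-stability for SDEs with converging Lipschitz coefficients then transfers back to $\varphi^n\to\varphi$.
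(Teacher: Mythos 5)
Your proposal is correct, and it is substantially more self-contained than what the paper actually writes. The paper's proof has the same two-step skeleton as yours -- almost-sure convergence plus uniform moment bounds, hence uniform integrability, hence $L^p$ convergence -- but it disposes of the hard step in one sentence, citing ``arguments similar to McKean, Ch.~3.10a'' for the almost-sure convergence, and records only the centered bound $\sup_{n,x}\mathds{E}|\vp_t^n(x)-x|^p<\infty$. (Note, incidentally, that part 1 of the Lemma as literally stated, with $\sup_{x\in\mathds{R}}\mathds{E}|\vp_t(x)|^p$, cannot hold, since this moment grows like $|x|^p$; both you and the paper in fact prove the centered version, which is what is used later.) What you do differently is supply the missing argument: the splitting $\aph_n(\vp^n)-\aph(\vp)=[\aph_n(\vp^n)-\aph_n(\vp)]+[\aph_n(\vp)-\aph(\vp)]$, the occupation-formula treatment of the second piece, and the contraction estimate $\Delta^{n,*}_t\le 2\Var_{\mathds{R}}\aph\cdot\sup_z L_z^{\vp(x)}(t)\cdot\Delta^{n,*}_t+\veps_n(t)$ iterated over a finite random partition. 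Your alternative via the scale functions $s_n$ is also correct and is in fact closer in spirit to the classical Zvonkin/McKean route that the paper is implicitly invoking: it trades the local-time bookkeeping for a Gronwall argument with uniformly Lipschitz coefficients, using that $\sigma_n'=-2\aph_n\circ s_n^{-1}$ is uniformly bounded (the same computation the paper performs for $\sigma$ in Section 6).

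Two points in the contraction route deserve to be made explicit. First, the partition step needs $z\mapsto L_z^{\vp(x)}(t)$ to be \emph{continuous}, not merely c\'adl\'ag: the claim that $\sup_z\bigl(L_z^{\vp(x)}(t_{k+1})-L_z^{\vp(x)}(t_k)\bigr)\to 0$ as the mesh shrinks is a Dini-type statement, and Dini's theorem fails for c\'adl\'ag functions (consider $\mathds{1}_{[c-1/n,\,c)}\downarrow 0$ pointwise with supremum $1$). Continuity in $z$ does hold here because the drift is bounded, so the jump $2\int_0^t\mathds{1}_{\{\vp_s=z\}}\aph(\vp_s)\,ds$ of the semimartingale local time vanishes; it is also proved independently in the paper's Lemma \ref{tightness}, whose argument does not use the present Lemma, so citing it creates no circularity. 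Second, your final step can be simplified: since the martingale parts cancel, $|\vp^n_t(x)-\vp_t(x)|\le 2t\sup_z|\aph(z)|$ deterministically, so bounded convergence alone upgrades the almost-sure convergence to $L^p$, and no uniform-integrability argument is needed for part 2.
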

\begin{proof}
The convergence almost surely can be shown by arguments similar to
that of McKean \cite{McKean69}, Ch.3.10a. The boundedness of the
coefficients of (\ref{main_eq_n}) guarantees the uniform
boundedness of the moments:
$$
\sup_{n,x}\mathds{E}|\vp_t^n(x)-x|^p<\infty.
$$
This and convergence almost surely imply  the statement of Lemma.
\end{proof}

\section{Local times}\label{local}
For each $x\in \mathds{R}, n\geq 1 $, the processes
$(\varphi_t(x))_{t\geq0}$ and $(\varphi_t^n(x))_{t\geq0}$ solving
equations (\ref{main_eq}) and (\ref{main_eq_n}) are continuous
semimartingales. Then, almost surely, there exist local times of
these processes defined by the formulas
$$
\begin{aligned}
L_y^{\varphi(x)}(t)&=\lim_{\veps\downarrow0}\frac{1}{\veps}\int_0^t\mathds{1}_{[y,y+\veps)}(\varphi_s(x))d\langle\varphi(x),\varphi(x)\rangle_s
=\lim_{\veps\downarrow0}\frac{1}{\veps}\int_0^t\mathds{1}_{[y,y+\veps)}(\varphi_s(x))ds,
\\
L_y^{\varphi^n(x)}(t)&=\lim_{\veps\downarrow0}\frac{1}{\veps}\int_0^t\mathds{1}_{[y,y+\veps)}(\varphi^n_s(x))d\langle\varphi^n(x),\varphi^n(x)
\rangle_s=\lim_{\veps\downarrow0}\frac{1}{\veps}\int_0^t\mathds{1}_{[y,y+\veps)}(\varphi^n_s(x))ds.
\end{aligned}
$$
\begin{Remark} It follows from the definition that the local times is measurable with respect to the triple $(t,x,y),$ $t>0, \ x\in\mathds{R}$, \ $y\in\mathds{R}.$
\end{Remark}
\begin{Remark} \label{Remark_contin_mod} The family $L^{\vp(x)}, L^{\vp^n(x)}$ may be chosen such
that the maps $(t,y)\rightarrow L_y^{\varphi(x)}(t)$,
$(t,y)\rightarrow L_y^{\varphi^n(x)}(t)$ are continuous in $t$ and
c\'adl\'ag in $y$ (cf. \cite{Revuz+99}, Ch.VI). Further we
consider such modifications.
\end{Remark}

In this section we prove the convergence in square mean and
tightness of the sequence of the local times
$\{L_y^{\varphi^n(x)}(t)-L_y^{\varphi(x)}(t): \ n\geq 1\}.$

\begin{Lemma}\label{loc_time_conv_L2}
{\it For all $t\geq0, \ \{x, y\}\subset\mathds{R}$,
$$
\mathds{E}|L_y^{\varphi^n(x)}(t)-L_y^{\varphi(x)}(t)|^2\to 0 \ \
\mbox{as} \ n\to\infty.
$$ }
\end{Lemma}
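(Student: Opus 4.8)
The plan is to start from Tanaka's formula, which applies since both $\vp_\cdot(x)$ and $\vp^n_\cdot(x)$ are continuous semimartingales with $\langle\vp(x)\rangle_t=\langle\vp^n(x)\rangle_t=t$. Subtracting the two Tanaka representations at the level $y$ gives
\[
L_y^{\vp^n(x)}(t)-L_y^{\vp(x)}(t)=A_n+B_n+C_n,
\]
where $A_n=|\vp^n_t(x)-y|-|\vp_t(x)-y|$,
\[
B_n=-\int_0^t\Big(\operatorname{sgn}(\vp^n_s(x)-y)\,\aph_n(\vp^n_s(x))-\operatorname{sgn}(\vp_s(x)-y)\,\aph(\vp_s(x))\Big)\,ds,
\]
\[
C_n=-\int_0^t\Big(\operatorname{sgn}(\vp^n_s(x)-y)-\operatorname{sgn}(\vp_s(x)-y)\Big)\,dw(s).
\]
I would prove $\mathds{E}|A_n|^2\to0$, $\mathds{E}|B_n|^2\to0$, $\mathds{E}|C_n|^2\to0$ separately. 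The term $A_n$ is immediate: $|A_n|\leq|\vp^n_t(x)-\vp_t(x)|$, so Lemma \ref{converg_Lp} gives $\mathds{E}|A_n|^2\to0$. For $C_n$, by It\^o's isometry $\mathds{E}|C_n|^2=4\int_0^t P\{(\vp^n_s(x)-y)(\vp_s(x)-y)<0\}\,ds$, since the integrand of $C_n$ equals $\pm2$ exactly when the two processes lie on opposite sides of $y$. Because $\vp^n_s(x)\to\vp_s(x)$ in probability and $\vp_s(x)$ has a bounded density (hence no atom at $y$), each probability tends to $0$; dominated convergence in $s$ (the integrand is bounded by $1$) then yields $\mathds{E}|C_n|^2\to0$.

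The term $B_n$ is the genuine difficulty and I would treat it by a threefold splitting of its integrand, writing $K=\sup_x|\aph(x)|$. First, the sign-mismatch contribution is bounded by $2K\int_0^t\mathds{1}\{(\vp^n_s(x)-y)(\vp_s(x)-y)<0\}\,ds$, whose square is bounded by $(2Kt)^2$ while its expectation tends to $0$ exactly as for $C_n$; boundedness then upgrades this to $L^2$-convergence. Second, the mollification contribution $\int_0^t|\aph_n(\vp^n_s(x))-\aph(\vp^n_s(x))|\,ds$ is rewritten by the occupation times formula as $\int_{\mathds{R}}|\aph_n(z)-\aph(z)|\,L_z^{\vp^n(x)}(t)\,dz$; it is bounded by $2Kt$, and its expectation is $\int_{\mathds{R}}|\aph_n(z)-\aph(z)|\,\mathds{E}L_z^{\vp^n(x)}(t)\,dz$, which tends to $0$ because $\mathds{E}L_z^{\vp^n(x)}(t)$ is bounded uniformly in $n,z$ (uniformly bounded transition densities, the drift $\aph_n$ being bounded by $K$ and supported in $[-S,S]$) and $\aph_n\to\aph$ in $L^1_{loc}$. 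Since a nonnegative quantity that is bounded and converges in $L^1$ also converges in $L^2$, this piece is controlled.

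The third, and decisive, contribution is $\int_0^t|\aph(\vp^n_s(x))-\aph(\vp_s(x))|\,ds$, where continuity of $\aph$ is unavailable. The idea is to exploit the bounded-variation structure: for $u<v$ one has $|\aph(v)-\aph(u)|\leq|d\aph|\big((u,v]\big)$, so by Fubini
\[
\int_0^t|\aph(\vp^n_s(x))-\aph(\vp_s(x))|\,ds\leq\int_{\mathds{R}}\Big(\int_0^t\mathds{1}\{z\in(\vp_s(x)\wedge\vp^n_s(x),\ \vp_s(x)\vee\vp^n_s(x)]\}\,ds\Big)\,|d\aph|(z).
\]
Using the almost sure locally uniform convergence $\vp^n_\cdot(x)\to\vp_\cdot(x)$ (from the McKean-type argument in the proof of Lemma \ref{converg_Lp}), the interval shrinks to a point, so for each $z$ the inner occupation integral converges to $\int_0^t\mathds{1}\{\vp_s(x)=z\}\,ds=0$; the inner integral is bounded by $t$ and $|d\aph|(\mathds{R})=\Var_{\mathds{R}}\aph<\infty$, whence the whole expression tends to $0$ almost surely and stays below $t\,\Var_{\mathds{R}}\aph$, giving $L^2$-convergence by bounded convergence. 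The main obstacle is precisely this last step: the jumps of $\aph$ forbid any pointwise-continuity argument, and the resolution is to pass to the total-variation measure $|d\aph|$ and to show that the time the two trajectories spend with a fixed level $z$ between them vanishes — this is where the almost sure uniform convergence of the approximating flows and the absence of positive occupation at a single point are both needed.
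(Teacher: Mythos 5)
Your proof is correct, and although it starts from the same point as the paper --- subtracting the two Tanaka representations and treating the bounded-variation, martingale, and drift parts separately, with the first part killed by Lemma \ref{converg_Lp} exactly as in the paper --- the treatment of the two hard terms is genuinely different. The paper disposes of both the stochastic integral and the drift integral with one abstract device, Proposition \ref{Theor_Propos} (a variant of a lemma of Kulik and Pilipenko): the compositions $\mathds{1}_{(y,\infty)}(\vp^n_s(x))\aph_n(\vp^n_s(x))$ converge in probability to the limit composition because the approximating functions converge in measure, $\vp^n_s(x)\to\vp_s(x)$ in probability, and the transition densities $g_n$, controlled by the Portenko bound (\ref{denn}), are uniformly integrable with respect to a Gaussian reference measure; dominated convergence then upgrades this to $L^2$. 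You avoid that proposition entirely: for the martingale part you use It\^o's isometry plus the absence of atoms of the law of $\vp_s(x)$, and for the drift part you split off (i) the sign mismatch, handled like the martingale term, (ii) the mollification error, handled by the occupation times formula, $\|\aph_n-\aph\|_{L^1}\to0$, and the uniform bound $\mathds{E}L_z^{\vp^n(x)}(t)\le 2K\sqrt{t}$ coming from (\ref{denn}), and (iii) the genuinely BV term $\int_0^t|\aph(\vp^n_s(x))-\aph(\vp_s(x))|\,ds$, which you dominate by the total-variation measure of the random interval between the two trajectories and kill using the a.s.\ uniform convergence $\vp^n\to\vp$ together with the fact that the occupation measure of $\vp(x)$ charges no single point (this holds simultaneously for all levels, by the occupation density formula, so no Fubini gymnastics over exceptional sets is needed). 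Both routes rest on the same Gaussian density bound (\ref{denn}); yours trades the paper's compact, reusable convergence-in-probability lemma for elementary hands-on estimates that make the role of the bounded-variation structure explicit, at the price of invoking a.s.\ convergence (which the paper itself uses inside Lemma \ref{converg_Lp}) and of two small points you should state explicitly: your symmetric (sgn) Tanaka formula produces the symmetric local time while the paper's definition is one-sided --- harmless here, since the local time is continuous in the space variable, the occupation time of any single level being zero --- and the inequality $|\aph(v)-\aph(u)|\le|d\aph|\bigl((u,v]\bigr)$ presupposes a right-continuous version of $\aph$, which costs nothing because modifying $\aph$ on a Lebesgue-null set does not change the drift integral, the occupation measure being absolutely continuous. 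Finally, note that the bounded density of the limit law of $\vp_s(x)$, which you use for the no-atom argument, is not literally stated in the paper (only (\ref{denn}) for the $\vp^n$ is); it follows by passing the uniform Gaussian bound to the weak limit, and that one-line justification should be included.
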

\begin{proof}
By Tanaka's formula (see \cite{Revuz+99}, p. 223)
\begin{multline}\label{local_phi_n}
L_y^{\varphi^n(x)}(t)=(\varphi^n_t(x)-y)^+-(x-y)^+-\int_0^t\mathds{1}_{(y,\infty)}(\varphi_s^n(x))dw(s)\\-\int_0^t\mathds{1}_{(y,\infty)}(\varphi_s^n(x))\aph_n(\varphi_s^n(x))ds.
\end{multline}
\begin{multline}\label{local_phi}
L_y^{\varphi(x)}(t)=(\varphi_t(x)-y)^+-(x-y)^+-\int_0^t\mathds{1}_{(y,\infty)}(\varphi_s(x))dw(s)\\-\int_0^t\mathds{1}_{(y,\infty)}(\varphi_s(x))\aph(\varphi_s(x))ds.
\end{multline}
Then
$$
\mathds{E}\left(L_y^{\varphi^n(x)}(t)-L_y^{\varphi(x)}(t)\right)^2\leq
K(I+II+III),
$$
where $K$ is a constant,
\begin{eqnarray*}
I&=&\mathds{E}\left((\varphi^n_t(x)-y)^+-(\varphi_t(x)-y)^+\right)^2,\\
II&=&\mathds{E}\left(\int_0^t\mathds{1}_{(y,\infty)}(\varphi_s^n(x))dw(s)-\int_0^t\mathds{1}_{(y,\infty)}(\varphi_s(x))dw(s)\right)^2,\\
III&=&\mathds{E}\left(\int_0^t\mathds{1}_{(y,\infty)}(\varphi_s^n(x))\aph_n(\varphi_s^n(x))ds-\int_0^t\mathds{1}_{(y,\infty)}(\varphi_s(x))\aph(\varphi_s(x))ds\right)^2.
\end{eqnarray*}
For $I$ the convergence  follows from Lemma \ref{converg_Lp}.

To prove the convergence of $II$ and $III$ to $0$ we need the following statement.
\begin{Proposition}\label{Theor_Propos} {\it Let $\{\xi_n: n\geq0\}$ be a sequence of random variables. Assume that for any $n\geq 1$ the distribution of $\xi_n$ is absolutely continuous w.r.t. a probability measure $\nu$. Denote the corresponding density by $q_n$. Let $\{f_n: \ n\geq0\}$ be a sequence of measurable functions. Suppose that the following conditions hold:
\begin{enumerate}[1)]
\item $\xi_n\to\xi_0, \ n\to\infty$ in probability;
\item $f_n\to f_0, \ n\to\infty$ in measure $\nu$;
\item the sequence of densities $\{q_n: \ n\geq1\}$ is uniformly integrable w.r.t. measure $\nu$.
\end{enumerate}
Then $f_n(\xi)\to f_0(\xi_0), \ n\to\infty,$ in probability.}
\end{Proposition}
\begin{proof}
The proof is similar to \cite{Kulik+00}, Lemma 2.
\end{proof}
   According to the Lebesgue dominated convergence theorem,  to prove that $III\to 0, \ n\to\infty,$ it is enough to show that
   \begin{equation}\label{proposition}
\mathds{1}_{(y,\infty)}(\varphi_s^n(x))\aph_n(\varphi_s^n(x))\to\mathds{1}_{(y,\infty)}(\varphi_s(x))\aph(\varphi_s(x)), \ n\to \infty,  \ \mbox{in probability}.
   \end{equation}
   Apply the Proposition  \ref{Theor_Propos} in which we put $\xi_n=\varphi_s^n(x).$ Let $g_n(t,x,y), \ t\geq0, x\in\mathds{R}, y\in\mathds{R},$ be the transition probability density of the process $(\varphi_t^n(x))_{t\geq0}$. The density satisfies the inequality (cf. \cite{Portenko90}, Lemma 2.10)
\begin{equation}\label{denn}
g_n(t,x,y)\leq K\frac{1}{\sqrt{t}}e^{-\mu\frac{(y-x)^2}{t}}
\end{equation}
in every domain of the form $t\in[0,T], \ x\in\mathds{R}, \
y\in\mathds{R}.$ Here $T>0,\ \mu\in(0,1/2),$ $K$ is a constant
that depends only on $T, \ \mu$ and $\sup_{n,x}|\aph_n(x)|.$
Put
$$
\rho(y)=C\exp\left\{-\mu\frac{(y-x)^2}{t}\right\},
$$
and
$$
\nu(dy)=\rho(y)dy,
$$
where $C=\sqrt{\mu/(\pi t)}.$ Then the distribution of $\xi_n$ is absolutely continuous w.r.t. $\nu$, and the corresponding Radon-Nikodim density is equal to
$$
q_n(t,x,y)=\frac{g_n(t,x,y)}{\rho(y)}.
$$
The sequence $\{q_n(t,x,y): \ n\geq1\}$ is uniformly bounded for fixed $t>0, \ x\in\mathds{R},$ and, consequently, uniformly integrable w.r.t. measure $\nu$. Thus by Proposition \ref{Theor_Propos} relation (\ref{proposition}) is justified. The convergence of $II$ can be shown analogously.
The Lemma is proved.
\end{proof}

\begin{Lemma} \label{tightness}
{\it Let $\{c,d\}\subset\mathds{R}, \ c<d.$ Then
\begin{enumerate}[1)]
\item For each pair $(t,x)\in[0,\infty)\times\mathds{R}$, the
local times $L_{y}^{\varphi(x)}(t),$ $L_{y}^{\varphi^n(x)}(t), \
n\geq1$, are continuous in $y$ on $[c,d]$. \item For each fixed
pair $(t,x), \ t\geq0, \ x\in\mathds{R}$, the family of random
elements $\{L_{\centerdot}^{\varphi^n(x)}(t)
-L_\centerdot^{\varphi(x)}(t): \ n\geq1\}$ is tight in $C([c,d]).$
\end{enumerate} }
\end{Lemma}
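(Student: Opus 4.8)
The plan is to reduce both assertions to increment estimates for the local times in the space variable, obtained from Tanaka's formula, and to control the resulting occupation-time integrals uniformly in $n$ by means of the Gaussian upper bound (\ref{denn}) on the transition densities. Tightness at a single level will come for free from the $L^2$-convergence already established.

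For the first assertion I would invoke the formula for the jump of the local time of a continuous semimartingale $X=X_0+M+A$ in the space variable (see \cite{Revuz+99}, Ch.~VI), namely
$$
L_y^{X}(t)-L_{y-}^{X}(t)=2\int_0^t\mathds{1}_{\{X_s=y\}}\,dA_s .
$$
For $X=\vp^n(x)$ the finite-variation part is $A_s=\int_0^s\aph_n(\vp_u^n(x))\,du$, so the jump equals $2\aph_n(y)\int_0^t\mathds{1}_{\{y\}}(\vp_s^n(x))\,ds$, and analogously for $\vp(x)$. By the occupation-time formula $\int_0^t\mathds{1}_{\{y\}}(\vp_s^n(x))\,ds=\int_{\mathds{R}}\mathds{1}_{\{y\}}(z)L_z^{\vp^n(x)}(t)\,dz=0$ almost surely, being the integral of a local-time density over a single point. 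Hence the modifications chosen in Remark~\ref{Remark_contin_mod} have no jumps in $y$ and are therefore continuous on $[c,d]$.

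For the second assertion I would apply the Kolmogorov tightness criterion in $C([c,d])$: it suffices that the family be tight at one point and satisfy a uniform (in $n$) increment bound
$$
\mathds{E}\big|\big(L_{y_2}^{\vp^n(x)}(t)-L_{y_2}^{\vp(x)}(t)\big)-\big(L_{y_1}^{\vp^n(x)}(t)-L_{y_1}^{\vp(x)}(t)\big)\big|^{4}\le C|y_2-y_1|^{1+\b},\qquad \b>0 .
$$
Tightness at a fixed level is immediate from Lemma~\ref{loc_time_conv_L2}, which gives boundedness in $L^2$. For the increment bound I would treat each process separately: applying Tanaka's formula at the two levels $y_1<y_2$ and subtracting, the difference of positive parts contributes a term bounded by $2|y_2-y_1|$, while the martingale and drift parts reduce, via the Burkholder--Davis--Gundy inequality and the uniform bound $\sup_{n,x}|\aph_n(x)|<\infty$ from Remark~\ref{support_phi_n}, to moments of the occupation time $\tau:=\int_0^t\mathds{1}_{(y_1,y_2]}(X_s)\,ds$.

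The heart of the matter, and the main obstacle, is the occupation-time moment estimate $\mathds{E}\,\tau^{m}\le C_m\,|y_2-y_1|^{m}$, uniform in $n$. I would obtain it by expanding $\tau^m$ over the time simplex, applying the Markov property, and bounding each transition density by (\ref{denn}); integrating each spatial variable over $(y_1,y_2]$ yields one factor $|y_2-y_1|$ per step, while the remaining time integrals of $\prod(s_i-s_{i-1})^{-1/2}$ over the simplex converge. Since the constant $K$ in (\ref{denn}) depends only on $T,\mu$ and $\sup_{n,x}|\aph_n(x)|$, the estimate is uniform in $n$. Taking $m=2$ controls the martingale term (through BDG) to order $|y_2-y_1|^{2}$ and $m=4$ controls the drift term, giving the increment bound with $\b=1$; applying the same reasoning to $\vp(x)$ and combining by Minkowski's inequality completes the verification of the Kolmogorov criterion.
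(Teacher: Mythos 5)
Your proof is correct, and for the tightness assertion 2) it is essentially the paper's own argument: the paper also invokes Billingsley's criterion (Theorem 12.3), gets tightness at a single level from Lemma \ref{loc_time_conv_L2}, splits the increment $L_{y_2}-L_{y_1}$ by Tanaka's formula into the positive-part, martingale and drift terms, bounds the first two pointwise by $|y_2-y_1|$, and controls the martingale term by Burkholder's inequality, reducing everything to moments of the occupation time $\tau=\int_0^t\mathds{1}_{(y_1,y_2]}(\vp^n_s(x))\,ds$, estimated exactly by your simplex-plus-Gaussian-bound computation with the density bound (\ref{denn}). The only cosmetic difference there is that the paper never needs $\mathds{E}\tau^4$: for the drift term it uses the Cauchy--Schwarz bound $IV^4\le\bigl(\int_0^t\mathds{1}\,ds\bigr)^2\bigl(\int_0^t\aph_n^2\,ds\bigr)^2$, so that $\mathds{E}IV^4\le C t^2\,\mathds{E}\tau^2$; your $m=4$ estimate works equally well and is the natural generalization of the paper's $m=2$ display. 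Where you genuinely diverge is the continuity assertion 1): the paper extracts it from the very same increment estimates via Billingsley's Theorem 12.4 (a process that is c\`adl\`ag in $y$ and satisfies the Kolmogorov-type moment bound has no jumps), whereas you use the structural jump formula $L_y^{X}(t)-L_{y-}^{X}(t)=2\int_0^t\mathds{1}_{\{X_s=y\}}\,dA_s$ together with the fact that the occupation measure, having the local time as Lebesgue density, is atomless, so every jump vanishes almost surely simultaneously in $y$. Both are valid; your route is shorter, requires no moment estimates at all for continuity, applies to any SDE with bounded measurable drift, and gives continuity on all of $\mathds{R}$ at once, while the paper's route buys economy, deriving both conclusions of the Lemma from the single set of estimates it must establish anyway for tightness.
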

\begin{proof} We prove the Lemma for $c=-1, \ d=1.$ The case of
arbitrary $c, d$ can be treated similarly.

Put
$$
R_{y}^{x,n}(t)=L_y^{\varphi^n(x)}(t)-L_y^{\varphi(x)}(t).
$$
By virtue of \cite{Billingsley68}, Theorem 12.3  to prove the tightness it is enough to show that
\begin{enumerate}[1)]
\item the sequence $\{R^{x,n}_{0}(t): \ n\geq1\}$ is tight;
\item there exist $\gamma\geq0, \aph>1,$ and $K>0$, such that for all $\{y_1,y_2\}\subset[-1,1]$
\begin{equation}\label{Kolmogorov_ineq}
\mathds{E}|R^{x,n}_{y_2}(t)-R^{x,n}_{y_1}(t)|^\gamma\leq K|y_2-y_1|^\aph.
\end{equation}
\end{enumerate}
Besides, according to \cite{Billingsley68}, Th.12.4, inequality (\ref{Kolmogorov_ineq}) provides the continuity of $R^{x,n}_{y}(t)$ with respect to  $y$  on $[-1,1]$ for each pair $(t,x)$ and each $n\geq1$.

The first item follows from Lemma \ref{loc_time_conv_L2} since
the fact that $\mathds{E}(R^{x,n}_0(t))^2\to 0$ as $n\to\infty,$ implies $L_0^{\varphi^n(x)}(t)-L_0^{\varphi(x)}(t)\to 0$ in probability as $n\to\infty$. The convergence ensures the tightness.

The proof of the second item is standard enough. We give  necessary calculations though.
Assume that $y_1<y_2$ and represent $L_{y_2}^{\varphi^n(x)}(t)-L_{y_1}^{\varphi^n(x)}(t)$ in the form
\begin{equation}\label{tightness_summand}
L_{y_2}^{\varphi^n(x)}(t)-L_{y_1}^{\varphi^n(x)}(t)=I-II-III-IV,
\end{equation}
where
\begin{eqnarray*}
I&=&(\varphi_t^n(x)-y_2)^+-(\varphi_t^n(x)-y_1)^+,\\
II&=&(x-y_2)^+-(x-y_1)^+,\\
III&=&\int_0^t \mathds{1}_{(y_1,y_2]}(\varphi_s^n(x))dw(s),\\
IV&=&\int_0^t\mathds{1}_{(y_1,y_2]}(\varphi_s^n(x))\aph_n(\varphi_s^n(x))ds.
\end{eqnarray*}

It is easy to see that
\begin{equation}\label{I_Kolmogorov}
\mathds{E}I^2\leq(y_2-y_1)^2, \  \mathds{E}II^2\leq(y_2-y_1)^2,
\end{equation}
Making use of Burkholder's inequality (cf. \cite{Ikeda+81}, Ch.3, Th. 3.1) we obtain that for each fixed $T>0$,
\begin{multline*}
\mathds{E}\max_{0\leq t\leq T}III^4\leq C\mathds{E}\left(\int_0^T\mathds{1}_{(y_1,y_2]}(\varphi_s^n(x))ds\right)^2\\
\leq 2C\mathds{E}\left(\int_0^T\mathds{1}_{(y_1,y_2]}(\varphi_s^n(x))ds\int_s^T\mathds{1}_{(y_1,y_2]}(\varphi_u^n(x))du \right)\\
=2C\int_0^T ds\int_s^T\mathds{E}\left(\mathds{1}_{(y_1,y_2]}(\varphi_s^n(x))\mathds{1}_{(y_1,y_2]}(\varphi_u^n(x))\right)du\\
= 2C\int_0^T ds\int_s^T du\int_{y_1}^{y_2}dy\int_{y_1}^{y_2}g_n(s,x,y)g_n(u-s,y,z)dz,
\end{multline*}
where $g_n(t,x,y), \ t\geq0, \ x\in\mathds{R}, \ y\in\mathds{R},$ is the transition probability density of the process $(\varphi^n_t(x))_{t\geq0}$.
So we have (see (\ref{denn}))
\begin{multline}\label{II_Kolmogarov}
\mathds{E}\max_{0\leq t\leq T}III^4\leq 2CK^2\int_0^Tds\int_s^Tdu\int_{y_1}^{y_2}dy\int_{y_1}^{y_2}\frac{e^{-\mu\frac{(y-x)^2}{s}}}{\sqrt{s}}\frac{e^{-\mu\frac{(z-y)^2}{u-s}}}{\sqrt{u-s}}dz\\
\leq\tilde{K}(y_2-y_1)^2\int_0^t\int_s^t s^{-1/2}(u-s)^{-1/2}du\leq\tilde{K}(y_2-y_1)^2T.
\end{multline}
Here we denote by $\tilde{K}$ different constants.

Using H\"{o}lder inequality and Remark \ref{variation_phi_n} we obtain
\begin{multline*}
\mathds{E}IV^4\leq\mathds{E}\left[\left(\int_0^t\mathds{1}_{(y_1,y_2]}(\varphi_s^n(x))ds\right)^2\left(\int_0^t\aph_n^2(\varphi_s^n(x))ds\right)^2\right]\\
\leq\left(||\aph|| t\right)^2\mathds{E}\left(\int_0^t\mathds{1}_{(y_1,y_2]}(\varphi_s^n(x))ds\right)^2.
\end{multline*}
Then from estimate (\ref{II_Kolmogarov}) we obtain
\begin{equation}\label{III_Kolmogorov}
\mathds{E}IV^4\leq\tilde K t^3(y_2-y_1)^2.
\end{equation}
So each summand in the right-hand side of (\ref{tightness_summand}) satisfies the second condition of Theorem 12.3 of \cite{Billingsley68}. Then the left-hand side of (\ref{tightness_summand}) is continuous with respect to $y$ and  tight in $C([-1,1])$.
Note that the estimates similar to (\ref{I_Kolmogorov})--(\ref{III_Kolmogorov}) hold for the process $(\varphi_t(x))_{t\geq0}$. This fact guarantees the continuity of  $L_\centerdot^{\varphi(x)}(t)$ with respect to $y$ on $[-1,1]$.
So $\{L_\centerdot^{\varphi^n(x)}(t)-L_\centerdot^{\varphi(x)}(t): \ n\geq1\}$ is a tight sequence of random elements in $C([-1,1])$.
The Lemma is proved.
\end{proof}

\section{Differential properties of the flow $\varphi_t(x)$}\label{differential_properties}

Denote by $\psi_t^n(x)$ the derivative of the function $\varphi_t^n(x)$ with
respect to $x$, i.e.
$$
\psi_t^n(x)=\left(\varphi_t^n(x)\right)'_x.
$$
Then $\psi_t^n(x)$ is a solution to the following differential
equation
$$
d\psi_t^n(x)=\aph_n'(\varphi_t^n(x))\psi_t^n(x)dt.
$$
Solving this equation we get
\begin{equation}\label{derivative}
\psi_t^n(x)=\exp\left\{\int_0^t\aph_n'(\varphi_s^n(x))ds\right\}.
\end{equation}
\begin{Lemma}\label{limit_local_Theorem}
{\it For all $t\geq0, \ x\in\mathds{R},$
\begin{equation*}\label{limit_to_local}
\int_0^t\aph_n'(\varphi_s^n(x))ds\rightarrow\int_{-\infty}^{+\infty}L_z^{\varphi(x)}(t)d\aph(z), \ n\to\infty,
\end{equation*}
in probability.}
\end{Lemma}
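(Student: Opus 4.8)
The plan is to begin with the occupation times formula. Each $\vp^n_t(x)$ is a continuous semimartingale with $\langle\vp^n(x),\vp^n(x)\rangle_s=s$, so the occupation times formula (\cite{Revuz+99}, Ch.~VI) applied to the Borel function $\aph_n'$ gives
$$
\int_0^t\aph_n'(\vp_s^n(x))\,ds=\int_{-\infty}^{+\infty}\aph_n'(z)L_z^{\vp^n(x)}(t)\,dz=\int_{-\infty}^{+\infty}L_z^{\vp^n(x)}(t)\,d\aph_n(z),
$$
the last equality because $\aph_n\in C^1$, so $d\aph_n(z)=\aph_n'(z)\,dz$. Hence it suffices to show that $\int L_z^{\vp^n(x)}(t)\,d\aph_n(z)\to\int L_z^{\vp(x)}(t)\,d\aph(z)$ in probability. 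By Remark \ref{support_phi_n} every measure $d\aph_n$ is supported in some fixed $[-S,S]$; enlarging $S$ so that the compactly supported $d\aph$ lives there too, all integrals may be restricted to $[-S,S]$.

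The key preliminary step is to upgrade the pointwise convergence of local times to a uniform one, namely
$$
\sup_{z\in[-S,S]}\bigl|L_z^{\vp^n(x)}(t)-L_z^{\vp(x)}(t)\bigr|\to0,\quad n\to\infty,\ \text{in probability.}
$$
I would obtain this by combining Lemmas \ref{loc_time_conv_L2} and \ref{tightness}: the $L^2$-convergence at each fixed $z$ forces all finite-dimensional distributions of $R_\cdot^{x,n}(t)=L_\cdot^{\vp^n(x)}(t)-L_\cdot^{\vp(x)}(t)$ to converge to those of the zero process, while Lemma \ref{tightness} supplies tightness of $\{R_\cdot^{x,n}(t)\}$ in $C([-S,S])$. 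Together they yield weak convergence of $R_\cdot^{x,n}(t)$ to the deterministic zero element of $C([-S,S])$, and weak convergence to a constant coincides with convergence in probability, which is exactly the displayed uniform statement.

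With this, I would split
$$
\int_{-S}^{S}L_z^{\vp^n(x)}(t)\,d\aph_n(z)-\int_{-S}^{S}L_z^{\vp(x)}(t)\,d\aph(z)=A_n+B_n,
$$
with $A_n=\int_{-S}^S\bigl(L_z^{\vp^n(x)}(t)-L_z^{\vp(x)}(t)\bigr)\,d\aph_n(z)$ and $B_n=\int_{-S}^S L_z^{\vp(x)}(t)\,(d\aph_n(z)-d\aph(z))$. For $A_n$ the bound $|A_n|\le\sup_{z\in[-S,S]}|L_z^{\vp^n(x)}(t)-L_z^{\vp(x)}(t)|\cdot\Var_{\mathds{R}}\aph_n$, together with $\Var_{\mathds{R}}\aph_n\le\Var_{\mathds{R}}\aph$ (Remark \ref{variation_phi_n}) and the uniform convergence above, gives $A_n\to0$ in probability. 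For $B_n$ I would argue pathwise: for almost every $\omega$ the map $z\mapsto L_z^{\vp(x)}(t)$ is continuous on $[-S,S]$ (Lemma \ref{tightness}, item~1), and since $\aph_n'=g_n*d\aph$, Fubini gives $\int L_z^{\vp(x)}(t)\,d\aph_n(z)=\int\bigl(\int L_z^{\vp(x)}(t)\,g_n(z-w)\,dz\bigr)\,d\aph(w)$, where the inner integral tends to $L_w^{\vp(x)}(t)$ uniformly in $w$ because $g_n$ is an approximate identity and $L$ is uniformly continuous on the compact set; as $d\aph$ has finite total variation, $B_n\to0$ almost surely. Adding the two pieces proves the Lemma.

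I expect the main obstacle to be the second step: because both the integrand (the local time) and the integrating measure depend on $n$, pointwise $L^2$-convergence alone is insufficient, and one must pass to convergence that is uniform in $z$. The tightness in Lemma \ref{tightness} is precisely what promotes the finite-dimensional convergence to uniform convergence in probability, and the uniform variation bound $\Var_{\mathds{R}}\aph_n\le\Var_{\mathds{R}}\aph$ is what keeps the varying measures under control; everything else is routine once the difference is split as above.
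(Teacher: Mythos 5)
Your proposal is correct and follows essentially the same route as the paper: the occupation times formula, the splitting into $\int(L_z^{\vp^n(x)}(t)-L_z^{\vp(x)}(t))\,d\aph_n(z)$ plus a remainder, and the key step of upgrading Lemma \ref{loc_time_conv_L2} (finite-dimensional convergence) and Lemma \ref{tightness} (tightness) to uniform convergence in probability on $[-S,S]$, controlled by $\Var_{\mathds{R}}\aph_n\leq\Var_{\mathds{R}}\aph$. The only difference is cosmetic: for the second term the paper simply asserts $\int L_z^{\vp(x)}(t)\,d\aph_n(z)\to\int L_z^{\vp(x)}(t)\,d\aph(z)$ almost surely from the continuity of the local time in $z$, whereas you spell this out via the Fubini/approximate-identity computation, which is a valid and slightly more explicit justification of the same fact.
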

\begin{proof}
For each pair $(t,x), \ t\geq 0, \ x\in\mathds{R},$ according to the occupation times formula (see \cite{Revuz+99}, Ch.VI, Corollary 1.6) we have, almost surely,
\begin{multline}\label{uuu}
\int_0^t\aph_n'(\varphi_s^n(x))ds=\int_{\mathds{R}}\aph_n'(z)L_z^{\varphi^n(x)}(t)dz\\
=\int_{\mathds{R}}\aph_n'(z)(L_z^{\varphi^n(x)}(t)-L_z^{\varphi(x)}(t))dz+\int_{\mathds{R}}\aph_n'(z)L_z^{\varphi(x)}(t)dz\\
=\int_{\mathds{R}}(L_z^{\varphi^n(x)}(t)-L_z^{\varphi(x)}(t))d\aph_n(z)+\int_{\mathds{R}}L_z^{\varphi(x)}(t)d\aph_n(z)=I+II.
\end{multline}
Remark \ref{support_phi_n} and the continuity of the processes
$(L_z^{\vp(x)}(t))_{t\geq0}$, $(L_z^{\vp^n(x)}(t))_{t\geq0}$ in
$z$ entail the existence of the integrals in the right-hand side
of (\ref{uuu}). Besides, this leads to the relation
\begin{equation*}
II\to\int_{-\infty}^{+\infty}L_z^{\varphi(x)}(t)d\aph(z), \  n\to\infty, \ \mbox{almost surely.}
\end{equation*}
To prove the Lemma it remains to show that
$$
I\to 0, \ n\to\infty,  \ \mbox{in probability}.
$$
Lemma \ref{tightness} together with Prokhorov's theorem (cf.
\cite{Billingsley68}, Th.6.1) show that the family
$\left\{L_\centerdot^{\varphi^n(x)}(t)-L_\centerdot^{\varphi(x)}(t):
\ n\geq1\right\}$ is relatively compact in $C([-S,S])$ (here $S$
is a constant defined in Remark \ref{support_phi_n}). By Lemma
\ref{loc_time_conv_L2}, all the finite-dimensional distributions
of $L_\centerdot^{\varphi^n(x)}(t)-L_\centerdot^{\varphi(x)}(t)$
converge to that of  the random element in $C([-S,S])$ identically
equal to $0$. Therefore the sequence of random elements
$\left\{L_\centerdot^{\varphi^n(x)}(t)-L_\centerdot^{\varphi(x)}(t):
\ n\geq1\right\}$ converge in distribution to $0$ in $C([-S,S])$
(see \cite{Billingsley68}, Theorem 8.1). Then for all
$\varepsilon>0,$
$$
P\left\{\sup_{y\in[-S,S]}\left|L_y^{\varphi^n(x)}(t)-L_y^{\varphi(x)}(t)\right|>\varepsilon\right\}\to 0, \ n\to\infty.
$$
We have (remind that for all $n\geq 1$, $\supp\alpha_n\in[-S,S]$)
\begin{multline*}
P\left\{\left|\int_\mathds{R}\left(L_z^{\varphi^n(x)}(t)-L_z^{\varphi(x)}(t)\right)d\aph_n(z)\right|>\varepsilon\right\}\\
\leq P\left\{\sup_{y\in[-S,S]}\left|L_y^{\varphi^n(x)}(t)-L_y^{\varphi(x)}(t)\right|\cdot\Var_\mathds{R}\aph_n>\varepsilon\right\}\\
\leq P\left\{\sup_{y\in[-S,S]}\left|L_y^{\varphi^n(x)}(t)-L_y^{\varphi(x)}(t)\right|>\frac{\varepsilon}{\Var_{\mathds{R}} \aph}\right\}\to
0, n\to\infty.
\end{multline*}
The assertion of the Lemma follows immediately.
\end{proof}
\section{Proof of Theorem \ref{meeting}}
\begin{proof}{\bf Stage 1.} Let $\aph$ be a function of bounded variation on $\mathds{R}$ having a compact support.

Lemma \ref{limit_local_Theorem} guarantees that for each $t\geq 0,
\ x\in\mathds{R},$
\begin{equation}\label{conv_in_prob}
\psi_t^n(x)=\exp\left\{\int_0^t\aph_n'(\varphi_s^n(x))ds\right\}
\to
\exp\left\{\int_{\mathds{R}}L_z^{\varphi(s)}(t)d\aph(z)\right\}=:\psi_t(x), \ n\to\infty,
\end{equation}
in probability.  Let us estimate the $p$th moment of the process
$(\psi_t^n(x))_{t\geq0}$.

For all $p\geq1, \ t\geq0, \ x\in\mathds{R}$ by occupation times
formula, we have
$$
\mathds{E}|\psi_t^n(x)|^p=\mathds{E}\exp\left\{p\int_0^t\aph_n'(\varphi_s^n(x))ds\right\}=\mathds{E}\exp\left\{p\int_{-\infty}^{+\infty}
L_z^{\vp^n(x)}(t)d\aph_n(z)\right\}.
$$
Let $p_1,\dots,p_4$ be such that $p_k>1, \ k=\overline{1,4},$ and $\sum_{k=1}^{4}\frac{1}{p_k}=1.$ Using H\"{o}lder's inequality and Tanaka's formula we get
$$
\mathds{E}|\psi_t^n(x)|^p\leq\prod_{k=1}^{4}\left(\mathds{E}\exp\left\{pp_k\int_{-\infty}^{+\infty}f_k(t,x,z)d\aph_n(z)\right\}\right)^{1/p_k},
$$
where
\begin{equation*}
\begin{aligned}
f_1(t,x,z)&=(\vp_t^n(x)-z)^+,\\
f_2(t,x,z)&=-(x-z)^+,\\
f_3(t,x,z)&=-\int_0^t\mathds{1}_{(z,+\infty)}(\vp_s^n(x))dw(s),\\
f_4(t,x,z)&=-\int_0^t\mathds{1}_{(z,+\infty)}(\vp_s^n(x))\aph_n(\vp_s^n(x))ds.
\end{aligned}
\end{equation*}
Then we have
\begin{multline}\label{unif_integr_i}
\mathds{E}\exp\left\{pp_1\int_{-\infty}^{+\infty}f_1(t,x,z)d\aph_n(z)\right\}\\
\leq\mathds{E}\exp\left\{pp_1\int_{-\infty}^{+\infty}\left(|x-z|+
\int_0^t|\aph_n(\vp_s(x))|ds+|w(t)|\right)d\aph_n(z)\right\}\\
\leq \exp\left\{Cpp_1\Var_\mathds{R}\aph+||\aph||t\right\}\mathds{E}e^{|w(t)|},
\end{multline}
\begin{equation}\label{unif_integr_ii}
\mathds{E}\exp\left\{pp_2\int_{-\infty}^{+\infty}f_2(t,x,z)d\aph_n(z)\right\}\leq \mathds{E}\exp\left\{Cpp_2\Var_{\mathds{R}}\aph\right\},
\end{equation}
where $C$ is some positive constant,
\begin{equation}\label{unif_integr_iv}
\mathds{E}\exp\left\{pp_4\int_{-\infty}^{+\infty}f_4(t,x,z)d\aph_n(z)\right\}\leq \mathds{E}\exp\left\{pp_4\Var_{\mathds{R}}\aph\cdot||\aph||\right\},
\end{equation}

Consider $f_3$. Using Jensen's inequality, we get
\begin{multline}\label{mmm}
\mathds{E}\exp\left\{pp_3\int_{-\infty}^{+\infty}f_3(t,x,z)d\aph_n(z)\right\}\\
\leq \frac{1}{\Var_{\mathds{R}}\aph_n}\int_{-\infty}^{+\infty}\mathds{E}\exp\left\{pp_3\Var_{\mathds{R}}\aph_n f_3(t,x,z)\right\}d\aph_n(z)\\
\leq\frac{1}{\Var_{\mathds{R}}\aph_n}\int_{-\infty}^{+\infty}\sup_{v\in\mathds{R}}\mathds{E}\exp\left\{pp_3\Var_{\mathds{R}}\aph_n f_3(t,x,v)\right\}d\aph_n(z)\\
=\sup_{v\in\mathds{R}}\mathds{E}\exp\left\{pp_3\Var_{\mathds{R}}\aph_n f_3(t,x,v)\right\}.
\end{multline}
Let $v$ be fixed.
By \cite{Ikeda+81}, Th. II.7.2$'$, for each pair $(x,v)$, the process \break $M_t(x,v):=-\int_0^t\mathds{1}_{(v,+\infty)}(\vp_s^n(x))dw(s)$, $t\geq0,$ is a local square integrable martingale that can be represented as follows
$$
M_t(x,v)=W^{x,v}(\tau_t(x,v)),
$$
where $(W^{x,v}(t))_{t\geq0}$ is a standard Wiener process, $\tau_t(x,v)=\int_0^t\mathds{1}_{(v,+\infty)}(\vp_s^n(x))ds$.

Note that for all $\{x,z\}\subset\mathds{R}$, $\tau_t(x,v)\leq t$. Then
\begin{equation*}
\mathds{E}\exp\left\{pp_3\Var_{\mathds{R}}\aph_n f_3(t,x,v)\right\}
\leq\mathds{E}\exp\left\{pp_3\Var_{\mathds{R}}\aph \sup_{s\in[0,t]}|W^{x,v}(s)|\right\}=C,
\end{equation*}
where $C$ is a constant independent of $x$ and $v$.
This and (\ref{mmm}) imply the estimate
\begin{equation}\label{unif_integr_iii}
\mathds{E}\exp\left\{pp_3\int_{-\infty}^{+\infty}f_3(t,x,z)d\aph_n(z)\right\}\leq C,
\end{equation}
where $C$ ia some constant.
Now the uniform boundedness of the $p$th moment follows from
inequalities (\ref{unif_integr_i})-(\ref{unif_integr_iii}). This and (\ref{conv_in_prob}) imply that
for all $t\geq0,  \ p\geq1$,
$$
\mathds{E}|\psi_t^n(x)-\psi_t(x)|^p\to 0, \
n\to\infty.
$$
Since
$$
\sup_{n,x}\mathds{E}\left(|\psi_t^n(x)|^p+|\psi_t(x)|^p\right)<\infty,
$$
by the dominated convergence theorem, we get the relation
$$
\mathds{E}\int_{c}^d\left|\psi_t^n(x)-\psi_t(x)\right|^pdx\to 0, \
n\to\infty,
$$
valid for all $\left\{c,d\right\}\in\mathds{R}, \ c<d, \ p\geq1.$ So there exists a subsequence $\left\{n_k: \ k\geq1\right\}$ such
that
\begin{equation*}
\int_{c}^d\left|\psi_t^{n_k}(x)-\psi_t(x)\right|^pdx\to 0 \ \mbox{a.s. as} \
n\to \infty.
\end{equation*}
Without loss of generality we can suppose that
\begin{equation}\label{Sobolev_der}
\int_{c}^d\left|\psi_t^{n}(x)-\psi_t(x)\right|^pdx\to 0 \ \mbox{a.s. as} \
n\to \infty,
\end{equation}
and (see Lemma \ref{converg_Lp})
\begin{equation}\label{Sobolev_der}
\int_{c}^d\left|\vp_t^{n}(x)-\vp_t(x)\right|^pdx\to 0 \ \mbox{a.s. as} \
n\to \infty.
\end{equation}

This imply that, almost surely, the
function $\varphi_t(x), \ t\geq0, \ x\in\mathds{R},$ has a weak derivative in the Sobolev sense with respect to $x$ in any interval $[c,d]$ (cf.
\cite{Nikolsliy75}, \S 19.5), and this derivative is equal to
$\psi_t(x)=\exp\left\{\int_{-\infty}^{+\infty}L_z^{\varphi(x)}(t)d\aph(z)\right\}, \ x\in[c,d]$.
Besides, for all $\left\{x_1,x_2\right\}\subset\mathds{R},$ $x_1< x_2,$ the
equality
\begin{equation}\label{Newton_Leib}
\varphi_t(x_2)-\varphi_t(x_1)=\int_{x_1}^{x_2}\psi_t(y)dy=\int_{x_1}^{x_2}\exp\left\{\int_{-\infty}^{+\infty}L_z^{\varphi(y)}(t)d\aph(z)\right\}dy
\end{equation}
holds true  almost surely. Note that generally the exceptional set depends on $t$.

Fix $T>0$. Since $L_z^{\vp(y)}(t)$ is continuous in $t$ and $z$  (see Remark \ref{Remark_contin_mod}), monotonic in $t$, and $\supp\aph\subset[-S,S]$, we have
\begin{multline*}
\forall y\in [x_1,x_2] \ \ P\left\{\int_{-\infty}^{+\infty}L_z^{\vp(y)}(t)d\aph(z)\geq -\sup_{z\in[-S,S]}L_z^{\vp(y)}(t)\cdot||\aph||\right.\\
\left.\geq -\sup_{z\in[-S,S]}L_z^{\vp(y)}(T)\cdot||\aph||>-\infty, \ t\in[0,T]\right\}=1.
\end{multline*}
Put $M_T(y)=\sup_{z\in[-S,S]}L_z^{\vp(y)}(T)\cdot||\aph||.$
Then by the continuity of $L_z^{\vp(y)}(t)$ in $t$ and Fubini's theorem,
$$
P\left\{\inf_{t\in[0,T]}\int_{-\infty}^{+\infty}L_z^{\vp(y)}(t)d\aph(z)\geq -M_T(y)>-\infty \ \mbox{for almost all} \ y\in[x_1,x_2]\right\}=1
$$
This implies that
for all $T>0,\ x_1<x_2,$
\begin{multline*}
P\left\{\inf_{t\in[0,T]}(\varphi_t(x_2)-\varphi_t(x_1))>0\right\}=P\left\{\inf_{t\in[0,T]}\int_{x_1}^{x_2}\exp\left\{\int_{-\infty}^{+\infty}L_z^{\varphi(y)}(t)d\aph(z)\right\}dy \right\}\geq\\
P\left\{ \inf_{t\in[0,T]}  \int_{x_1}^{x_2}\exp\left\{-M_{T}(y)\right\}dy>0\right\}=1.
\end{multline*}
Passing to the limit as $T$ tends to $+\infty$, we arrive at the relation
$$
P\left\{\varphi_t(x_2)-\varphi_t(x_1)>0, \ t\geq0\right\}=1.
$$

{\bf Stage 2.} Let $\aph$ be an arbitrary function on $\mathds{R}$ satisfying conditions (\ref{Condition_A}), (\ref{Condition_B}).
For $n\geq 1,$ let $h_n$ be a smooth function on $\mathds{R}$ such that $0\leq h_n(x)\leq 1, \ x\in\mathds{R};$ $h_n(x)=1, \ x\in[-n,n];$  $h_n(x)=0, \ |x|>n+1.$ Put
$$
\aph_n(x)=\aph(x)h_n(x), x\in\mathds{R}.
$$
Suppose $(\vp_t^n(x))_{t\geq0}$ is a solution of equation (\ref{main_eq_n}).
Put
$\tau_n=\sup\left\{t: \ \sup_{0\leq s\leq t}|\vp_t^n(x)|\leq n\right\}$. As $\aph(x)=\aph_n(x)$ on $[-n,n]$, we have $\vp_t(x)=\vp_t^n(x)$ on $[0,\tau_t]$ almost surely. To prove the existence and uniqueness of a strong solution to equation (\ref{main_eq}) we need to show that $\tau_n\to+\infty, \ n\to\infty,$ almost surely. By Chebyshev's inequality  and condition (\ref{Condition_B}) for  $T>0$,
\begin{multline*}
P\{\tau_n<T\}=P\{\sup_{0\leq t\leq T}|\vp_t^n(x)|>n\}\leq \frac{1}{n^2}\mathds{E}\left(\sup_{0\leq t\leq T}(\vp_t^n(x))^2\right)\\
\leq \frac{C}{n^2}\left((\vp_t^n(0))+T\mathds{E}\sup_{0\leq t\leq T}\int_0^t(1+(\vp_t^n(x))^2)ds+T\right)\\
\leq \frac{C}{n^2}\left(K(1+T+T^2)+T\int_0^T\mathds{E}\sup_{0\leq s\leq T}(\vp_s^n(x))^2ds\right),
\end{multline*}
where $C,K$ are some positive constants.
The Gronwall-Bellman inequality implies
$$
\mathds{E}\sup_{0\leq t\leq T}(\vp_t^n(x))^2\leq C_1,
$$
where $C_1$ is a constant depending only on $T$ and $x$. This fact and monotonicity of the sequence $\{\tau_n: \ n\geq 1\}$ give
$\tau_n\to+\infty$ as $n\to\infty$ almost surely. Hence there exists a unique strong solution to equation (\ref{main_eq}).

To prove the differentiability of the flow let us consider an arbitrary interval $[x_1,x_2]$. By comparison theorem (cf. \cite{Nakao73}, Th. 2.1)
$\vp_t(x_1)\leq \vp_t(x)\leq\vp_t(x_2).$ Denote
$$
M_t=\max_{s\in[0,t]}\left(|\vp_s(x_1)|\vee|\vp_s(x_2)|\right).
$$
There exists $N>0$ such that $M_t<N.$ Then $\vp_s(x)=\vp_s^n(x)$ for all $x\in[x_1,x_2], \ s\in[0,t]$, and $n>N,$ almost surely. Consequently, for all $n>N$, the local times and the derivatives of the processes $\vp_s(x), \vp_s^n(x)$ coincide on $x\in[x_1,x_2], \ s\in[0,t]$.  This entails   assertions 2)-4) of the Theorem.
\end{proof}
\section{Stationary distribution}
Assume that a function $\aph$ satisfies conditions (\ref{BVonR}), (\ref{limONinfty}).
In this section we prove the existence of  a stationary distribution for the process
$(\vp_t(x))_{t\geq0}$ provided that conditions (\ref{BVonR}), (\ref{limONinfty}) are justified. Apply Theorem 3 of
\cite{Gikhman+68engl}, \S 18 to equation (\ref{main_eq}). Put
$$
s(x)=\int_0^x\exp\left\{-2\int_0^z\aph(y)dy\right\}dz, \ x\in\mathds{R}.
$$
By (\ref{limONinfty}),
$$
\begin{aligned}
s(x)\to +\infty,& \ \ x\to+\infty,\\
s(x)\to -\infty,& \ \ x\to-\infty.
\end{aligned}
$$
Besides, $s$ has a continuous positive derivative
$$
s'(x)=\exp\{-2\int_0^x\aph(z)dz\}, \ x\in\mathds{R}.
$$
Let $q(\cdot)=s^{-1}(\cdot)$ be a continuously differentiable function on $\mathds{R}$ inverse to $s(\cdot).$
The function $\eta_t(x)=s(\varphi_t(x))$
is a solution of the SDE
$$
\left\{
\begin{aligned}
d\eta_t(x)&=\sigma(\eta_t(x))dw(t),\\
\eta_0(x)&=s(x),
\end{aligned}
\right.
$$
where
$
\sigma(y)=s'(q(y))=\exp\{-2\int_0^{q(y)}\aph(z)dz\}, \ y\in\mathds{R}.
$
Using (\ref{limONinfty}) it is easy to see that
\begin{equation}\label{sigma_int}
\int_{-\infty}^{+\infty}\frac{1}{\sigma^2(y)}<\infty.
\end{equation}

The continuity of $q$ and boundedness of $\aph$ provide that the function $\sigma$ is locally Lipschitz continuous.  Let us see that $\sigma$ is globally Lipschitz continuous function on $\mathds{R}$. As a locally Lipschitz continuous function it has a derivative at almost all points $x\in\mathds{R},$ and the derivative is as follows
$$
\sigma'(y)=-2\aph(q(y))q'(y)\exp\left\{-\int_0^{q(y)}\aph(z)dz\right\}.
$$
Taking into account that
$$
q'(y)=\frac{1}{s'(q(y))}=\exp\left\{2\int_0^{q(y)}\aph(z)dz\right\},
$$
we arrive at the formula
$$
\sigma'(y)=-2\aph(q(y))
$$
valid for almost all $y\in\mathds{R}.$ Then according to the Newton-Leibniz formula for locally absolutely continuous functions, for all $\{x_1,x_2\}\subset\mathds{R},$
$$
|\sigma(x_2)-\sigma(x_1)|=\left|\int_{x_1}^{x_2}2\aph(q(y))dy\right|\leq 2||\aph||\cdot|x_2-x_1|.
$$
So $\sigma$ is Lipschitz continuous,
and the conditions of  \cite{Gikhman+68engl},\ \S 18, Theorem 3 are
fulfilled.
Let $\Phi_{t,x}(y)< \ y\in\mathds{R},$ be the distribution function of the random
variable $\varphi_t(x),$ i.e.
$$
\Phi_{t,x}(y)=P\{\vp_t(x)<y\}.
$$
The Theorem implies the  existence of a stationary distribution $P_{stat}(y), \ y\in\mathds{R}$, and for all $\{x,y\}\subset\mathds{R},$
$$
P_{stat}(y)=\lim_{t\to\infty}\Phi_{t,x}(y).
$$
\section{Proof of Theorem \ref{asymptot}}
Heuristically the asymptotic behavior of the difference  $\vp_t(x_2)-\vp_t(x_1)$ can be guessed as follows.
 If we represent the local time from (\ref{Newton_Leib}) by Tanaka's formula (\ref{local_phi}), then by the ergodic theorem, the last integral  in the right-hand side of (\ref{local_phi}) is equivalent to $t\int_{y}^{+\infty}\aph(z)dP_{stat}(z)$ as $t$ tends to $\infty$. The first member is bounded in probability because $\varphi_t(x)$ converges weakly to the stationary distribution. The stochastic integral in the right-hand side of  (\ref{local_phi}) is a continuous martingale with its characteristics being less than or equal to $t$. Therefore it is naturally to expect that $L_y^{\varphi(x)}(t)\sim t\int_{y}^{+\infty}\aph(z)dP_{stat}(z), \ t\to\infty,$ and, respectively,
$$
\ln\left(\varphi_t(x_2)-\varphi_t(x_1)\right)\sim t \int_{-\infty}^{+\infty}\left(-\int_z^{+\infty}\aph(y)dP_{stat}(y)\right)d\aph(z), \ t\to\infty.
$$
Below we give the rigorous proof of this fact.
\vskip 5 pt

\begin{proof}
In this proof we will use the representation of the function $\aph$ in the form
$$
\aph(x)=\aph_1(x)-\aph_2(x), \ x\in\mathds{R},
$$
where $\aph_1,\aph_2$ are nondecreasing functions on $\mathds{R}.$
Using Jensen's inequality we get the lower bound for $\frac{\ln(\varphi_t(x_2)-\varphi_t(x_1))}{t}$ as follows
\begin{multline}\label{lower bound}
\frac{\ln(\varphi_t(x_2)-\varphi_t(x_1))}{t}=\frac{\ln\left(\int_{x_1}^{x_2}\exp\left\{\int_{-\infty}^{+\infty}L_z^{\varphi(x)}(t)d\aph(z)\right\}dx\right)}{t}\\
\geq\frac{\int_{x_1}^{x_2}\ln\left((x_2-x_1)\exp\left\{\int_{-\infty}^{+\infty}L_z^{\varphi(x)}(t)d\aph(z)\right\}\right)dx}{t(x_2-x_1)}\\
=\frac{1}{t}\ln(x_2-x_1)
+\frac{1}{t}\int_{x_1}^{x_2}\frac{\int_{-\infty}^{+\infty}L_z^{\varphi(x)}(t)d\aph(z)}{x_2-x_1}dx.
\end{multline}

On the other hand, let $p_1,p_2, p_3$ be
grater than $1$ and such that $\sum_k \frac{1}{p_k}=1$. Then by H\"older's inequality we obtain
\begin{multline}\label{sum_of_f}
\frac{\ln(\varphi_t(x_2)-\varphi_t(x_1))}{t}=\frac{\ln\left(\int_{x_1}^{x_2}\exp\left\{\int_{-\infty}^{+\infty}L_z^{\varphi(x)}(t)d\aph(z)\right\}dx\right)}{t}\\
=\frac{\ln\left(\int_{x_1}^{x_2}\exp\left\{\sum_{k=1}^3 \int_{-\infty}^{+\infty}f_k(t,x,z)d\aph(z)\right\}dx\right)}{t}\\
\leq\frac{\ln\left(\prod_{k=1}^3\left(\int_{x_1}^{x_2}
\exp\left\{p_k\int_{-\infty}^{+\infty}f_k(t,x,z)d\aph(z)\right\}dx\right)^{1/p_k}\right)}{t}\\
=\frac{\sum_{k=1}^{3}\frac{1}{p_k}\ln\left(\int_{x_1}^{x_2}\exp\left\{p_k\int_{-\infty}^{+\infty}f_k(t,x,z)d\aph(z)\right\}dx\right)}{t},
\end{multline}
where
$$
\begin{aligned}
f_1(t,x,z)&=(\varphi_t(x)-z)^+-(x-z)^+,\\
f_2(t,x,z)&=-\int_0^t\mathds{1}_{(z,\infty)}(\varphi_s(x))dw(s),\\
f_3(t,x,z)&=-\int_0^t\mathds{1}_{(z,\infty)}(\varphi_s(x))\aph(\varphi_s(x))ds.\\
\end{aligned}
$$

Let us show that the right-hand side of (\ref{sum_of_f}) converges to \break $\int_{-\infty}^{+\infty}\left(-\int_z^{+\infty}\aph(y)dP_{stat}(y)\right)d\aph(z)$ almost surely. The same relation for the right-hand side of (\ref{lower bound}) can be proved similarly.

Consider the summand with $f_1(t,x,z)$.
It is easy to see that for all $\{x,z\}\subset\mathds{R}, \ t\geq0$,
$$
|(\varphi_t(x)-z)^+-(x-z)^+|\leq|\varphi_t(x)-x|.
$$
By the comparison theorem (cf. \cite{Nakao73}, Th. 2.1) for all $x\in[x_1,x_2]$,
$$
|\varphi_t(x)-x|\leq|\varphi_t(x_2)-x_1|.
$$
Then
\begin{multline}\label{f_1}
\frac{\ln\left(\int_{x_1}^{x_2}\exp{p_1\int_{-\infty}^{+\infty}f_1(t,x,z)d\aph(z)}dx\right)}{t}\\
\leq \frac{\ln\left(\int_{x_1}^{x_2}\exp\left\{p_1|\varphi_t(x_2)-x_1|\cdot\Var\aph\right\}dx\right)}{t}\\
\leq
\frac{\ln\left((x_2-x_1)\Var\aph\right)}{t}+\frac{p_1|\vp_t(x_2)|}{t}+\frac{p_1|x_1|}{t}.
\end{multline}

The first and the third summands obviously tend to $0$ as $t\to \infty$. Let us show that the same assertion is true for the second summand.

Put $c_1=a/2, \ c_2=b/2.$ Fix $\varepsilon\in\left(0,1/2\min(-a,b)\right)$.  There exist  $N_1>0$ and
$N_2<0$ such that
\begin{equation}
\begin{aligned}\label{aph_N}
\aph(x)&<a+\varepsilon<c_1, \ x\geq N_1,\\
\aph(x)&>b-\varepsilon>c_2, \ x\leq N_2.
\end{aligned}
\end{equation}
Consider the following stochastic differential equations
\begin{eqnarray}
\chi_t^1(x)&=&x+c_1t+w(t)+L_{N_1+1}^{\chi^1(x)}(t),\label{chi_1}\\
\chi_t^2(x)&=&x+c_2t+w(t)-L_{N_2-1}^{\chi^1(x)}(t),\label{chi_2}
\end{eqnarray}
where $\left(L_{N_1+1}^{\chi^1(x)}(t)\right)_{t\geq0},$
$\left(L_{N_2-1}^{\chi^2(x)}(t)\right)_{t\geq0}$ are local times
of the processes $\left(\chi_t^1(x)\right)_{t\geq0}$,
$\left(\chi_t^2(x)\right)_{t\geq0}$ at the points $N_1+1, N_2-1$
respectively.

There exist solutions of these equations (see (\cite{Lions+84})).
Starting from $x>N_1+1,$ the solution to the former equation is a diffusion process taking values on $[N_1+1,+\infty)$ with instantaneous reflection at the point $N_1+1$. For $x<N_2-1$, the solution of the latter equation is a diffusion process taking values on $(-\infty, N_2-1]$ with instantaneous reflection at the point $N_2-1$.

Given $x> N_1+1$, then
$$
P\{\varphi_t(x)\leq\chi_t^1(x), \ t\geq0\}=1.
$$
Indeed, let $t_{N_1+1}=\inf\{t:\chi_t^1(x)=N_1+1\}$. Then for all
$t\in(0,t_{N_1+1})$, by (\ref{aph_N})
$$
\chi_t^1(t)-\varphi_t(x)=\int_0^t(c_1-\aph(\varphi_s(x)))ds>0.
$$
Consequently, if there exists a
point $r_0
\geq t_{N_1+1}$ such that
$\chi_{r_0}^1(x)<\varphi_{r_0}(x)$, then there exists a point
$r_1\in[t_{N_1+1},r_0)$ at which
$\chi_{r_1}^1(x)=\varphi_{r_1}(x).$ Moreover $\vp_{r_1}(x)\geq N_1+1$. Choose $\delta>0$
such that for all $s\in[r_1,r_1+\delta]$, $\varphi_s(x)\geq N_1.$ Then
\begin{equation}\label{chi_s}
\chi_s^1(x)-\varphi_s(x)=\int_{r_1}^s(c_1-\aph(\varphi_s(x)))ds+L_{N_1+1}^{\chi^1(x)}(s)-L_{N_1+1}^{\chi^1(x)}(r_1), \ s\in[r_1,r_1+\delta].
\end{equation}
But the right-hand side of (\ref{chi_s}) is non-negative. This implies that for each
$x>N_1+1,$ and all $t\geq0,$ $\chi_t^1(x)\geq \varphi_t(x).$ By the
comparison theorem (see \cite{Nakao73}, Th. 3.1) $\chi_t^1(x)\leq
B_t^1(x), \ t\geq0,$ where $(B_t^1(x))_{t\geq0}$ is a
one-dimensional Brownian motion with reflection at the point
$N_1+1$, which is a solution to the following SDE
$$
B_t^1(x)=x+w(t)+L^{B(x)}_{N_1+1}.
$$
Thus for all $x>N_1+1$,
\begin{equation}\label{chi_B1}
\vp_t(x)\leq B_t^1(x), \ t\geq 0.
\end{equation}

Involving  $(\chi_t^2(x))_{t\geq0}$  and arguing in the same way we
get the inequality
\begin{equation}\label{chi_B2}
\vp_t(x)\geq B_t^2(x), \ t\geq 0,
\end{equation}
valid for all
$x<N_2-1$, where $(B_t^2(x))_{t\geq0}$ is a  Brownian motion with
reflection at the point $N_2-1$ solving the following SDE
\begin{equation*}
B_t^2(x)=x+w(t)-L^{B(x)}_{N_2-1}.
\end{equation*}
It is known that for all $x\in\mathds{R},$
\begin{equation}\label{B_converg}
\begin{aligned}
\frac{B_t^1(x)}{t}&\to 0,\ t\to\infty\\
\frac{B_t^2(x)}{t}&\to 0,\ t\to\infty.
\end{aligned}
\end{equation}
The fact that $\frac{\varphi_t(x)}{t}\to 0, \ t\to\infty,$ follows
now from relations (\ref{B_converg}), inequalities (\ref{chi_B1}), (\ref{chi_B2}) and assertion that for all $\{d_1,x,d_2\}\subset\mathds{R}, \ d_1<x<d_2$,
\begin{equation}\label{tau_infty}
\tau_x[d_1,d_2]<\infty \ \mbox{a.s.},
\end{equation}
where
$$
\tau_x[d_1,d_2]=\inf\{t\geq0:\vp_t(x)=d_1 \ \mbox{or} \
\vp_t(x)=d_2\}.
$$
Inequality (\ref{tau_infty}) is a consequence of (\ref{sigma_int}) (cf. \cite{Gikhman+68engl}, \S 18).

Thus we have proved that the second term in the right-hand side of (\ref{f_1}) tends to zero as $t$ tends to $\infty.$

Examine the third  item in the right-hand side of (\ref{sum_of_f}). We have
$$
\int_{-\infty}^{+\infty}\left(-\int_0^t\mathds{1}_{\varphi_s(x)>z}
\aph(\varphi_s(x))ds\right)d\aph(z)=\sum_{i,j=1}^2 \mathrm I_{ij},
$$
where
$$
\mathrm I_{ij}=(-1)^{i+j}\int_{-\infty}^{+\infty}\left(-\int_0^t
\mathds{1}_{\varphi_s(x)>z}\aph_i(\varphi_s(x))ds\right)d\aph_j(z).
$$
Consider $\mathrm I_{11}.$
By the comparison theorem for all $t\geq0,\ x\in[x_1,x_2],$
\begin{multline}\label{I_11}
\int_{-\infty}^{+\infty}\left(-\int_0^t\mathds{1}_{\varphi_s(x_2)>z}
\aph_1(\varphi_s(x_2))ds\right)d\aph_1(z)\leq \mathrm I_{11}\\
\leq \int_{-\infty}^{+\infty}\left(-\int_0^t\mathds{1}_{\varphi_s(x_1)>z}
\aph_1(\varphi_s(x_1))ds\right)d\aph_1(z).
\end{multline}
Using the similar estimates for $\mathrm I_{12}, \mathrm I_{21}, \mathrm I_{22}$ we get
\begin{multline}\label{ineq f 3}
\frac{1}{p_3t}\ln\left(\int_{x_1}^{x_2}\exp\left\{p_3\int_{-\infty}^{+\infty}f_3(t,x,z)d\aph(z)\right\}dx
\right)\geq\frac{1}{t}\left[\frac{\ln(x_2-x_1)}{p_3}\right.\\+
\int_{-\infty}^{+\infty}\left(-\int_0^t\mathds{1}_{\varphi_s(x_2)>z}
\aph_1(\varphi_s(x_2))ds\right)d\aph_1(z)\\
+\int_{-\infty}^{+\infty}\left(-\int_0^t\mathds{1}_{\varphi_s(x_1)>z}
\aph_2(\varphi_s(x_1))ds\right)d\aph_1(z)\\
+\int_{-\infty}^{+\infty}\left(-\int_0^t\mathds{1}_{\varphi_s(x_1)>z}
\aph_1(\varphi_s(x_1))ds\right)d\aph_2(z)\\
+\int_{-\infty}^{+\infty}\left(-\int_0^t\mathds{1}_{\varphi_s(x_2)>z}
\aph_2(\varphi_s(x_2))ds\right)d\aph_2(z)\left.\right].
\end{multline}
Obviously, the first summand in the right-hand side of (\ref{ineq f 3}) tends to $0$  as $t$ tends to $\infty.$
 By the ergodic theorem (see Theorem 3, \S 18 of \cite{Gikhman+68engl}) for all $x\in\mathds{R}, i=1,2,$ we get
$$
\frac{1}{t}\int_0^t\mathds{1}_{\vp_s(x)>z}\aph_i(\vp_s(x))ds\to \int_z^{+\infty}\aph_i(y)dP_{stat}(y).
$$
 Making use of the dominated convergence theorem and collecting the members, we see that the expression in the right-hand side of (\ref{ineq f 3}) tends to $$\int_{-\infty}^{+\infty}\left(-\int_z^{+\infty}\aph(y)dP_{stat}(y)\right)d\aph(z)$$ almost surely as $t$ tends to $\infty$. Using the upper estimates for $\mathrm I_{ij}, \ \{i,j\}\subset\{1,2\},$ similarly we get that
\begin{multline*}
\frac{1}{p_3t}\ln\left(\int_{x_1}^{x_2}\exp\left\{p_3\int_{-\infty}^{+\infty}f_3(t,x,z)d\aph(z)\right\}dx
\right)\to \\
\int_{-\infty}^{+\infty}\left(-\int_z^{+\infty}\aph(y)dP_{stat}(y)\right)d\aph(z), \
t\to\infty, \ \mbox{almost surely.}
\end{multline*}
It is left to prove that the second member in the right-hand side of
(\ref{sum_of_f}) converges to $0$ as $t$ tends to $\infty$ almost surely.
It can be represented in the form
\begin{multline*}
\frac{\ln\left(\int_{x_1}^{x_2}\exp\left\{p_2\int_{-\infty}^{+\infty}
\left(-\int_0^t\mathds{1}_{\varphi_s(x)>z}dw(s)
\right)d\aph(z)\right\}dx\right)}{t}\\
=\frac{\ln(x_2-x_1)}{t}+\frac{p_2\int_{-\infty}^{+\infty}
\left(-\int_0^t\mathds{1}_{\varphi_s(x_1)>z}dw(s)
\right)d\aph(z)}{t}\\
+\frac{\ln\left(\int_{x_1}^{x_2}\exp\left\{p_2\int_{-\infty}^{+\infty}
\left(-\int_0^t\left(\mathds{1}_{\varphi_s(x)>z}-\mathds{1}_{\varphi_s(x_1)>z}\right)dw(s)
\right)d\aph(z)\right\}dx\right)}{t}
=I+II+III.
\end{multline*}
Consider $II$. By a martingale inequality (cf. \cite{Ikeda+81}), ineq. (6.16) of Ch. 1),
\begin{multline*}
\mathds{E}\sup_{r\in[0,t]}\left(\int_{-\infty}^{+\infty}\left(-\int_0^r\mathds{1}_{\varphi_s(x_1)>z}dw(s)\right)d\aph(z)\right)^2\leq\\
=\mathds{E}\sup_{r\in[0,t]}\left(\int_0^\tau\left(\int_{-\infty}^{+\infty}\mathds{1}_{\varphi_s(x_1)>z}d\aph(z)\right)dw(s)\right)^2\leq\\
4\mathds{E}\int_0^t\left(\int_{-\infty}^{+\infty}\mathds{1}_{\varphi_s(x_1)>z}d\aph(z)\right)^2ds\leq 4(\Var_{\mathds{R}}\aph)^2t.
\end{multline*}
Then by monotone convergence theorem
\begin{multline*}
\mathds{E}\sum_{n=1}^\infty\sup_{r\in[2^n,2^{n+1}]}\left(\frac{\int_{-\infty}^{+\infty}
\left(-\int_0^r\mathds{1}_{\varphi_s(x_1)>z}dw(s)\right)d\aph(z)}{r}\right)^2\\
\leq
\sum_{n=1}^{\infty}\frac{\mathds{E}\sup_{r\in[0,2^{n+1}]}\left(\int_{-\infty}^{+\infty}\left(-\int_0^r\mathds{1}_{\varphi_s(x_1)>z}dw(s)\right)
d\aph(z)\right)^2}{2^{2n}}\\
\leq \left(\Var_{\mathds{R}}\aph\right)^2\sum_{n=1}^{\infty}\frac{4\cdot 2^{n+1}}{2^{2n}}=\left(\Var_\mathds{R}\aph\right)^2\sum_{n=1}^{\infty}\frac{8}{2^n}<\infty.
\end{multline*}
This implies that
$$
\sup_{\tau\in[2^n,2^{n+1}]}\left(\frac{\int_{-\infty}^{+\infty}\left(-\int_0^\tau\mathds{1}_{\varphi_s(x_1)>z}dw(s)\right)d\aph(z)}{r}\right)^2\to 0, \ n\to\infty, \ \mbox{almost surely}.
$$
Consequently,
\begin{equation}\label{eee}
\lim_{t\to\infty}\frac{\int_{-\infty}^{+\infty}\left(-\int_0^t\mathds{1}_{\varphi_s(x_1)>z}dw(s)\right)d\aph(z)}{t}=0 \ \mbox {almost surely.}
\end{equation}

Note that
\begin{equation}\label{f_3_2}
III\leq\frac{(x_2-x_1)}{t}-\frac{p_2\sup_{x\in[x_1,x_2]}\sup_{r\in[0,t]}\left|\int_0^r\left(\int_{-\infty}^{+\infty}
\left(\mathds{1}_{\varphi_s(x)>z}-\mathds{1}_{\varphi_s(x_1)>z}
\right)d\aph(z)\right)dw(s)\right|}{t}.
\end{equation}
To prove that $III\to0$ as $t\to\infty$ it is sufficient to show that
\begin{equation}\label{iiiii}
\frac{\sup_{x\in[x_1,x_2]}\sup_{r\in[0,t]}\left|\int_0^r\left(\int_{-\infty}^{+\infty}
\left(\mathds{1}_{\varphi_s(x)>z}-\mathds{1}_{\varphi_s(x_1)>z}
\right)d\aph(z)\right)dw(s)\right|}{t}\to 0, \ t\to\infty, \ \mbox{almost surely}.
\end{equation}

Put $\xi_t(x)=\int_0^t\left(\int_{-\infty}^{+\infty}\mathds{1}_{\varphi_s(x)>z}d\aph(z)\right)dw(s).$
According to the Garsia-Rodemich-Rumsey inequality \cite{Garsia+70} for all $t\geq0,\ x\in[x_1,x_2]$, $q>1,\
\alpha\in(\frac{1}{q},1],$ there exists $c(\alpha,q)>0$ such that
\begin{equation}\label{Garcia_ineq}
|\xi_t(x)-\xi_t(x_1)|^q\leq
c(\alpha,q)|x-x_1|^{q\alpha-1}\int\!\!\!\!\int_{[x_1,x_2]^2}\frac{|\xi_t(u)-\xi_t(v)|^q}{|u-v|^{q\alpha+1}}dudv.
\end{equation}
Then for $q=4,$
\begin{multline}\label{aaa}
\mathds{E}\sup_{x\in[x_1,x_2]}\sup_{t\in[0,T]}|\xi_t(x)-\xi_t(x_1)|^4\\
\leq
c(\alpha,4)|x_2-x_1|^{4\alpha-1}\int\!\!\!\!\int_{[x_1,x_2]^2}\frac{\mathds{E}\sup_{t\in[0,T]}|\xi_t(u)-\xi_t(v)|^4}{|u-v|^{4\alpha+1}}dudv\\
\end{multline}
Let us estimate the expectation $\mathds{E}\sup_{t\in[0,T]}|\xi_t(u)-\xi_t(v)|^4$.
According to Burkholder's inequality (cf. \cite{Ikeda+81}, Ch.3, Th. 3.1) we get
\begin{multline*}
\mathds{E}\sup_{t\in[0,T]}|\xi_t(u)-\xi_t(v)|^4\\
\leq C\mathds{E}\left(\int_0^T\left(\int_{-\infty}^{+\infty}\left(\mathds{1}_{\varphi_s(u)>z}-\mathds{1}_{\varphi_s(v)>z}\right)d\aph(z)\right)^2ds\right)^2.
\end{multline*}
Consider the case of $u<v$. Making use of H\"{o}lder's inequality and applying the comparison theorem
we arrive at the inequality
\begin{multline*}
\left(\int_{-\infty}^{+\infty}\left(\mathds{1}_{\varphi_s(u)>z}-\mathds{1}_{\varphi_s(v)>z}\right)d\aph(z)\right)^2\\
\leq
2\Var\aph_1\int_{-\infty}^{+\infty}\left(\mathds{1}_{\varphi_s(u)>z}-\mathds{1}_{\varphi_s(v)>z}\right)^2d\aph_1(z)\\
+2\Var\aph_2\int_{-\infty}^{+\infty}\left(\mathds{1}_{\varphi_s(u)>z}-\mathds{1}_{\varphi_s(v)>z}\right)^2d\aph_2(z)\\
\leq C\int_{-\infty}^{+\infty}\left(\mathds{1}_{\varphi_s(u)>z}-\mathds{1}_{\varphi_s(v)>z}\right)d\bar{\aph}(z),
\end{multline*}
where for $z\in\mathds{R}$, $\bar{\aph}(z)=\aph_1(z)+\aph_2(z)$,  $C$ is a constant.

Then
\begin{multline*}
\mathds{E}\sup_{t\in[0,T]}|\xi_t(u)-\xi_t(v)|^4
\leq C\mathds{E}\left(\int_0^T\left(\int_{-\infty}^{+\infty}\left(\mathds{1}_{\varphi_s(u)>z}-\mathds{1}_{\varphi_s(v)>z}\right)d\bar{\aph}(z)\right)ds\right)^2\\
=C\int_\mathds{R}d\bar{\aph}(z)\int_\mathds{R}d\bar{\aph}(y)\mathds{E}\left[\int_0^T\left(
\left(\mathds{1}_{\varphi_s(u)>z}-\mathds{1}_{\varphi_s(v)>z}\right)\int_s^T\left(\mathds{1}_{\varphi_r(u)>y}-\mathds{1}_{\varphi_r(v)>y}\right)dr\right)ds\right]\\
\leq
 C\int_\mathds{R}d\bar{\aph}(z)\int_\mathds{R}d\bar{\aph}(y)\\
\times\mathds{E}\left[\int_0^T\left(
\left(\mathds{1}_{\varphi_s(u)>z}-\mathds{1}_{\varphi_s(v)>z}\right)
\mathds{E}\left(\int_s^T\left(\mathds{1}_{\varphi_r(u)>y}-
\mathds{1}_{\varphi_r(v)>y}\right)dr\slash\mathfrak{F}_s\right)\right)ds\right]\\
\leq
 C\int_\mathds{R}d\bar{\aph}(z)\int_\mathds{R}d\bar{\aph}(y)\\
 \times\mathds{E}\left[\int_0^T\left(
\left(\mathds{1}_{\varphi_s(u)>z}-\mathds{1}_{\varphi_s(v)>z}\right)
\mathds{E}\left(\int_0^{T-s}\left(\mathds{1}_{\varphi_r(u)>y}-
\mathds{1}_{\varphi_r(v)>y}\right)dr\right)\right)ds\right].
\end{multline*}
valid for all $T>0, \ u\in\mathds{R}, \ v\in\mathds{R}, \ u<v,$
with some constant $C$.

By arguments similar to that in  \cite{Gikhman+68engl}, \S 18, Remark 1 we have
$$
\mathds{E}\int_0^T(\mathds{1}_{\varphi_s(u)>z}-\mathds{1}_{\varphi_s(v)>z})ds\leq H(u-v), \ z\in\mathds{R},
$$
where $H$ is some positive constant. This implies
$$
\mathds{E}\sup_{x\in[x_1,x_2]}\sup_{t\in[0,T]}|\xi_t(x)-\xi_t(x_1)|^4\leq (\Var\bar{\aph})^2 H^2(u-v)^2.
$$
The case of $u\geq v$ can be treated analogously. Thus the inequality
\begin{multline}\label{aaa}
\mathds{E}\sup_{x\in[x_1,x_2]}\sup_{t\in[0,T]}|\xi_t(x)-\xi_t(x_1)|^4\\
\leq
c(\alpha,4)|x_2-x_1|^{4\alpha-1}\int\!\!\!\!\int_{[x_1,x_2]^2}\frac{C(u-v)^2}{|u-v|^{4\alpha+1}}dudv
\end{multline}
holds true for all $T>0, \{x_1,x_2\}\subset\mathds{R}, \ x_1<x_2.$
To provide the finiteness of the integral in the right-hand side
of (\ref{aaa}) we choose $\alpha$ such that $1-4\alpha>-1$, i.e.
$\alpha\in(\frac14,\frac12).$ Finally, calculating the integral we
get
\begin{equation*}
\mathds{E}\sup_{x\in[x_1,x_2]}\sup_{t\in[0,T]}|\xi_t(x)-\xi_t(x_1)|^4
\leq
C(x_2-x_1)^2,
\end{equation*}
where $C$ is a constant.

This inequality implies that the convergence in (\ref{iiiii}) holds in probability.
The almost surely convergence can be justified by arguments similar to that used in the proof  of formula (\ref{eee}). So we checked that $III\to 0$ as $t\to\infty$. This completes the proof of the fact that
$$
\lim_{t\to\infty}\frac{\ln(\vp_t(x_2)-\vp_t(x_1))}{t}\leq \int_{-\infty}^{+\infty}\left(-\int_z^{+\infty}\aph(y)dP_{stat}(y)\right)d\aph(z) \ \ \mbox{(see (\ref{sum_of_f}))}.
$$
Treating (\ref{lower bound}) analogously we get
$$
\int_{-\infty}^{+\infty}\left(-\int_z^{+\infty}\aph(y)dP_{stat}(y)\right)d\aph(z)\\
\leq\lim_{t\to\infty}\frac{\ln(\vp_t(x_2)-\vp_t(x_1))}{t}
$$
The Theorem 2 is proved.
\end{proof}

\section{Example}
Let  $\aph(x)=a\mathds{1}_{x\geq0}+b\mathds{1}_{x<0},$ where $a<0,
b>0.$ Given $\{x_1,x_2\}\subset\mathds{R},$ the processes
{$(\vp_t(x_1))_{t\geq0}$, $(\vp_t(x_2))_{t\geq0}$} move parallel
to each other while being on the same semiaxis.
 Theorem \ref{meeting} holds true  for the solution $(\vp_t(x))_{t\geq0}$ of corresponding SDE. The Sobolev derivative has the form (see (\ref{Sobolev_der_expres}))
$$
\nabla\varphi_t(x)=\exp\left\{(a-b)L_0^{\varphi(x)}(t)\right\},
$$
where $L_0^{\varphi(x)}(t)$ is a local time of the process
$(\varphi_t(x))_{t\geq0}$ at the point zero.

Let us find stationary distribution for the process $(\vp_t(x))_{t\geq0}$ (see Section 6).
We have
$$
s(x)=
\begin{cases}-\frac{1}{2a}\left(e^{-2ax}-1\right), &  x\geq0,\\
-\frac{1}{2b}\left(e^{-2bx}-1\right), &  x<0,
\end{cases}
$$
$$
s'(x)=
\begin{cases}e^{-2ax}, & \ x\geq0,\\
e^{-2bx}, & \ x<0,
\end{cases}
$$
and
$$
\sigma(y)=s'(q(y))=\begin{cases}
-1-2ay,& y\geq0,\\
-1-2by,& y<0,
\end{cases}
$$
where
$$
q(y)=\begin{cases}
-\frac{1}{2a}\ln(1-2ay),& y\geq0,\\
-\frac{1}{2b}\ln(1-2by),& y<0,
\end{cases}
$$
 is a continuously differentiable inverse function to $s(\cdot).$

Put $\eta_t(x)=s(\varphi_t(x)).$
Then (see Section 6) it is a solution of the SDE
$$
\left\{
\begin{aligned}
d\eta_t(x)&=\sigma(\eta_t(x))dw(t),\\
\eta_0(x)&=s(x).
\end{aligned}
\right.
$$
Let $F_{t,x}(y)=P\{\eta_t(x)<y\}$ be the distribution
function of the random variable $\eta_t(x)$. Then by  Theorem 3 of  \cite{Gikhman+68engl},\ \S 18, for all $x\in\mathds{R},$
\begin{equation}\label{Distr_eta}
\lim_{t\to\infty}F_{t,x}(y)=\frac{\int_{-\infty}^y\frac{dz}{\sigma^2(z)}}{\int_{-\infty}^{+\infty}\frac{dz}{\sigma^2(z)}}=
\begin{cases}
1+\frac{b}{a-b}\frac{1}{1-2ay},& y\geq0,\\
\frac{a}{a-b}\frac{1}{1-2by},& y<0.
\end{cases}
\end{equation}

Let $\Phi_{t,x}(y), \ y\in\mathds{R},$ be the distribution function of the random
variable $\varphi_t(x).$ From (\ref{Distr_eta}) for all $x\in\mathds{R}$,  we have
\begin{multline*}\label{Distr_phi}
P_{stat}(y)=\lim_{t\to\infty}\Phi_{t,x}(y)=\lim_{t\to\infty}P\{\varphi_t(x)<y\}=\lim_{t\to\infty}P\{\eta_t(x)<s(y)\}\\
=\lim_{t\to\infty}F_{t,x}(s(y))=
\begin{cases}
1+\frac{b}{a-b}e^{2ay},& y\geq0,\\
\frac{a}{a-b}e^{2by},& y<0.
\end{cases}
\end{multline*}
The stationary distribution function $P_{stat}(y)$ has a density of the form
\begin{equation}\label{density_phi}
p_{stat}(y)=
\begin{cases}
\frac{2ab}{a-b}e^{2ay},& y\geq0,\\
\frac{2ab}{a-b}e^{2by},& y<0.
\end{cases}
\end{equation}

Theorem 2 now is as follows. For all
$\{x_1,x_2\}\subset\mathds{R}, \ x_1<x_2,$
\begin{equation*}
\lim_{t\to+\infty}\frac{\ln(\varphi_t(x_2)-\varphi_t(x_1))}{t}=(b-a)\int_0^{+\infty}\frac{2a^2b}{a-b}e^{2ay}\mathds{1}_{y\geq0}dy=ab
\ \mbox{almost surely}.
\end{equation*}

\bibliographystyle{plain}


\begin{thebibliography}{10}

\bibitem{Zvonkin74}
A.K.Zvonkin.
\newblock A transformation of the phase space of a~diffusion process that
  removes the drift.
\newblock {\em Mat. Sb. (N.S.)}, 93(135):129--149, 1974.

\bibitem{Attanasio10}
S.~Attanasio.
\newblock Stochastic flows of diffeomorphisms for one-dimensional sde with
  discontinuous drift.
\newblock {\em Electron. Commun. Probab.}, 15:no. 20, 213--226, 2010.

\bibitem{Barlow+01}
M.T. Barlow, K.~Burdzy, H.~Kaspi, and A.~Mandelbaum.
\newblock Coalescence of skew {Brownian} motions.
\newblock {\em Seminaire de probabilites, Universite de Strasbourg},
  XXXV:202--205, 2001.

\bibitem{Bouleau+91}
N.~Bouleau and F.~Hirsch.
\newblock {\em Dirichlet forms and analysis on Wiener space}.
\newblock De Gruyter studies in mathematics. W. de Gruyter, 1991.

\bibitem{Burdzy+04}
K.~Burdzy and H.~Kaspi.
\newblock Lenses in skew {B}rownian flow.
\newblock {\em The Annals of Probability}, 32:3085--3115, 2004.

\bibitem{Flandoli+10}
F.~Flandoli, M.~Gubinelli, and E.~Priola.
\newblock Flow of diffeomorphisms for sdes with unbounded hölder continuous
  drift.
\newblock {\em Bulletin des Sciences Mathematiques}, 134(4):405 -- 422, 2010.

\bibitem{Garsia+70}
A.M. Garsia, E.~Rodemich, and H.jun. Rumsey.
\newblock {A real variable lemma and the continuity of paths of some Gaussian
  processes.}
\newblock {\em Math. J., Indiana Univ.}, 20:565--578, 1970.

\bibitem{Harrison+81}
J.~M. Harrison and L.~A. Shepp.
\newblock On skew brownian motion.
\newblock {\em Ann. Probab.}, 9(2):9--13, 1981.

\bibitem{Gikhman+68engl}
I.I.Gikhman and A.V.Skorokhod.
\newblock {\em Stochastic differential equations}.
\newblock Naukova Dumka, Kiev, 1968.
\newblock [Translated from the Russian to the English by K. Wickwire.
  Springer-Verlag, Heidelberg-New York, 1972].

\bibitem{Ikeda+81}
N.~Ikeda and S.~Watanabe.
\newblock {\em Stochastic differential equations and diffusion processes}.
\newblock Kodansha ltd., Tokyo, 1981.

\bibitem{Kulik+00}
A.~M. Kulik and A.~Yu. Pilipenko.
\newblock Nonlinear transformations of smooth measures on infinite-dimensional
  spaces.
\newblock {\em Ukrainian Mathematical Journal}, 52:1403--1431, 2000.
\newblock 10.1023/A:1010380119199.

\bibitem{Kunita90}
H.~Kunita.
\newblock {\em Stochastic Flows and Stochastic Differential Equations}.
\newblock Cambridge Univ. Press, 1990.

\bibitem{Lions+84}
P.~L. Lions and A.~S. Sznitman.
\newblock Stochastic differential equations with reflecting boundary
  conditions.
\newblock {\em Communications on Pure and Applied Mathematics}, 37(4):511--537,
  1984.

\bibitem{McKean69}
H.~P. McKean.
\newblock {\em Stochastic integrals}.
\newblock Academic Press, New York, London, 1969.

\bibitem{Nakao73}
S.~Nakao.
\newblock Comparison theorems for solutions of one-dimensional stochastic
  differential equations.
\newblock In G.~Maruyama and Yu. Prokhorov, editors, {\em Proceedings of the
  Second Japan-USSR Symposium on Probability Theory}, volume 330 of {\em
  Lecture Notes in Mathematics}, pages 310--315. Springer Berlin / Heidelberg,
  1973.
\newblock 10.1007/BFb0061496.

\bibitem{Nikolsliy75}
S.M. Nikolsky.
\newblock {\em A Course of Mathematical Analysis (Vol. 2)}.
\newblock Nauka, Moscow, 2 edition, 1975.
\newblock [Translated from the Russian to the English by V.M. Volosov. Mir
  Pub., Moscow, 1977].

\bibitem{Billingsley68}
P.Billingsley.
\newblock {\em Convergence of probability measures}.
\newblock J.Wiley \& Sons, New York, 1968.

\bibitem{Portenko90}
N.I. Portenko.
\newblock {\em Generalized diffusion processes}.
\newblock Translations of mathematical monographs. American Mathematical
  Society, 1990.

\bibitem{Revuz+99}
D.~Revuz and M.~Yor.
\newblock {\em Continuous martingales and Brownian motion}.
\newblock Springer-Verlag, Berlin, 1999.

\end{thebibliography}

\end{document}